\documentclass[11pt,reqno]{amsart}
\usepackage{tikz-cd}
\usepackage{amsfonts, amssymb, amscd}
\usepackage{amsmath}
\usepackage{verbatim}
\usepackage{amssymb}
\usepackage{mathrsfs}
\usepackage{graphicx}
\usepackage{cite}
\usepackage[all]{xy}
\usepackage{tikz}
\usepackage{subfiles}
\usepackage[toc,page]{appendix}
\usepackage{filecontents}
\usepackage{tikz-cd}
\usepackage{graphicx}
\usepackage{hyperref}
\usepackage{float}
\usepackage{color}
\usetikzlibrary{automata,arrows.meta}
\usepackage{spectralsequences}

\def\gr {{\operatorname{gr}}}

\def\im {{\operatorname{im}}}
\def\id {{\operatorname{id}}}
\def\rk {{\operatorname{rk}}}
\def\C {{\mathbb C}}

\def\R {{\mathbb R}}
\def\Z {{\mathbb Z}}
\def\PP {{\mathbb P}}

\allowdisplaybreaks[3]

\usetikzlibrary{arrows.meta,decorations.markings}

\tikzset{
    midarrow/.style={
    postaction={decorate},
    decoration={markings, mark=at position #1 with {\arrow{Stealth}}}
    },
    midarrow/.default=0.55
}

\theoremstyle{definition}

\newtheorem{theorem}{Theorem}[section]
\newtheorem{lemma}[theorem]{Lemma}
\newtheorem{proposition}[theorem]{Proposition}
\newtheorem{definition}[theorem]{Definition}
\newtheorem{notation}[theorem]{Notation}

\newtheorem{corollary}[theorem]{Corollary}
\newtheorem{remark}[theorem]{Remark}

\newtheorem{question}[theorem]{Question}

\numberwithin{equation}{section}

\begin{document}

\title{Minimal resolutions of toric substacks by line bundles}

\begin{abstract}
We construct minimal resolutions of pushforwards of structure sheaves of toric substacks of smooth toric stacks by line bundles, by realizing them as strong deformation retracts of the cellular resolutions constructed by Hanlon, Hicks, and Lazarev. We also provide a canonical and combinatorial description of the differentials of these minimal resolutions. The construction combines the homological perturbation lemma with Moore-Penrose inverses, and suggests a general strategy for extracting minimal resolutions from explicit non-minimal cellular resolutions, with potential broader applications to other settings in commutative algebra.
\end{abstract}

\author{Zengrui Han}
\address{Department of Mathematics\\
University of Maryland\\
College Park, MD 20742} \email{zhan223@umd.edu}

\maketitle
\tableofcontents

\section{Introduction}

\subsection{Motivation}

Resolutions of diagonals play an important role in the study of algebraic varieties. In the case of toric varieties and stacks, this picture is particularly well understood. On the level of cohomology, Fulton and Sturmfels \cite{Fulton-Sturmfels} gave a combinatorial formula for the cohomology classes of toric subvarieties in toric varieties in terms of the fan displacement rule, which in turn has applications to toric mirror symmetry \cite{BHduality,Han}. On the level of complexes or derived categories, several related constructions of toric diagonal resolutions, as well as their variants and applications, have appeared in the works of Brown-Erman \cite{BE2}, Brown-Sayrafi \cite{BS}, Anderson \cite{Anderson}, Favero-Huang \cite{FH}, and many others. More recently, inspired by homological mirror symmetry for toric varieties \cite{Bondal,FLTZ}, Hanlon, Hicks and Lazarev \cite{HHL} constructed canonical cellular resolutions of pushforwards of structure sheaves of toric substacks of smooth toric stacks by using a certain class of line bundles on the ambient toric stack, namely the Bondal-Thomsen collection. Their work also inspired subsequent work of Ballard et al. \cite{BBB+} on King's conjecture and the Cox category. Later, Borisov and the author \cite{BH} provided an equivalent formulation and a simplified proof of their result; see also \cite{BCHSY} for a generalization in the context of commutative algebra.

\smallskip

In general, however, the Hanlon-Hicks-Lazarev resolution (HHL resolution for short) can be far from minimal. There are algorithmic methods for constructing minimal resolutions from a given non-minimal resolution by iteratively removing redundant summands, for example using tools from discrete or algebraic Morse theory. Previously, Favero and Sapronov \cite{FS} gave a topological formula for the Betti numbers of such minimal resolutions by using tools from homological mirror symmetry for toric varieties. However, obtaining a canonical and purely combinatorial construction of the minimal resolution itself remains a difficult problem.

\smallskip

The goal of this paper is to address this problem. We give a canonical and combinatorial construction of the minimal resolution of pushforwards of structure sheaves of toric substacks of smooth toric stacks by line bundles in the Bondal-Thomsen collection. More importantly, we provide a combinatorial description of the differentials in this minimal resolution. Thus our result can be seen as an optimal refinement of the construction in \cite{HHL}. It is also worth mentioning that our proof does not rely on any form of mirror symmetry. As a corollary, our construction yields a purely algebraic proof of the Betti number formula of Favero-Sapronov.

\subsection{Main construction}
We briefly review the construction of \cite{HHL}; see Section \ref{sec:Bondal-HHL} for details. Let $\mathcal{Y}\hookrightarrow\mathcal{X}$ be a closed embedding of toric stacks of codimension $k$. Denote the associated Bondal stratification on the compact torus $(S^1)^k$ by $S$; roughly speaking, $S$ is a cell decomposition of $(S^1)^k$ induced by the rays in the fan of the ambient toric stack, see Definition \ref{def:Bondal-stratification} for details. Hanlon, Hicks and Lazarev \cite{HHL} constructed a cellular resolution of the pushforward of the structure sheaf of $\mathcal{Y}$ by using the Bondal-Thomsen collection of $\mathcal{X}$:
    \begin{align}\label{eq:HHL}
        C_{\bullet}:\ 0\rightarrow\bigoplus_{\substack{\dim\sigma=k }} \mathcal{O}_{\mathcal{X}}(\sigma) \xrightarrow{d} \cdots \rightarrow \bigoplus_{\substack{\dim\sigma=1}} \mathcal{O}_{\mathcal{X}}(\sigma) \xrightarrow{d} \bigoplus_{\substack{\dim\sigma=0}} \mathcal{O}_{\mathcal{X}}(\sigma)\rightarrow0
    \end{align}
Roughly speaking, they associate a monomial weight $\mathrm{HHL}_{\sigma,\tau}$ to each pair of cells $(\sigma,\tau)$ with $\dim\sigma=\dim\tau+1$ according to the values of the linear functions that are used to define the Bondal stratification on these cells, and use these to define the differentials $d$.

\smallskip

The combinatorics of our construction is more complicated. We need to separate the morphisms that appear in the HHL resolution into two classes, depending on whether their corresponding line bundles are the same or not.

\smallskip

{\bf Type I pairs:} $(\sigma,\tau)$ with $\mathcal{O}(\sigma) > \mathcal{O}(\tau)$ under the partial order on the Bondal-Thomsen collection, i.e., $\operatorname{Hom}(\mathcal{O}(\tau), \mathcal{O}(\sigma))=0$. We define the weight to be the usual $\mathrm{HHL}_{\sigma,\tau}$ appearing in the differential of the HHL resolution and call it the \textit{HHL weight}. 

\smallskip

{\bf Type II pairs:} $(\sigma,\tau)$ with $\mathcal{O}(\sigma) = \mathcal{O}(\tau)$. For pairs of this type, first recall that the cells in the Bondal stratification are naturally partitioned into subsets according to the associated line bundles in the Bondal-Thomsen collection, which we denote by $S=\sqcup_{[a]\in BT}S_{[a]}$. For each fixed $[a]\in BT$, the cells in $S_{[a]}$ form a natural cellular chain complex of $\C$-vector spaces
\begin{align}\label{eq:BMcomplex-in-introduction}
    0\rightarrow \bigoplus_{\substack{\sigma\in S_{[a]} \\ \dim\sigma = k}}\C\sigma \rightarrow \cdots \rightarrow \bigoplus_{\substack{\sigma\in S_{[a]} \\ \dim\sigma = 1}}\C\sigma \rightarrow \bigoplus_{\substack{\sigma\in S_{[a]} \\ \dim\sigma = 0}}\C\sigma \rightarrow 0
\end{align}
that computes the Borel-Moore homology of the piece $S_{[a]}$. A type II pair $(\sigma,\tau)$ will naturally appear in such a sequence, and we define the \textit{Moore-Penrose weight} (\textit{MP weight} for short) $\mathrm{MP}_{\sigma,\tau}$ by the Moore-Penrose inverse of the differentials of these cellular chain complexes, see Definition \ref{def:MP-weights} for the precise definition.

\medskip

Now let $\sigma,\tau$ be two strata in the Bondal stratification $S$ with $\dim\sigma = \dim\tau + 1$ and $\mathcal{O}(\sigma)\geq \mathcal{O}(\tau)$ in the Bondal-Thomsen poset $BT$. We consider all paths $\lambda$ from $\sigma$ to $\tau$ of the form
    \[
\begin{tikzpicture}[>=stealth]
\matrix (m) [matrix of math nodes,
  row sep=2.8em,
  column sep=2.8em,
  ampersand replacement=\&]{
  \sigma=\sigma_0 \& \& \sigma_1 \& \& \cdots \& \sigma_m \& \\
  \& \tau_0 \& \& \tau_1 \& \& \& \tau_m=\tau \\
};

\draw[->, blue] (m-1-1) -- node[midway, above, sloped, font=\tiny]
  {\color{blue}$\mathrm{HHL}_{\sigma_0,\tau_0}$} (m-2-2);
\draw[->,red] (m-2-2) -- node[midway, above, sloped, font=\tiny]
  {\color{red}$\mathrm{MP}_{\sigma_1,\tau_0}$} (m-1-3);
\draw[->, blue] (m-1-3) -- node[midway, above, sloped, font=\tiny]
  {\color{blue}$\mathrm{HHL}_{\sigma_1,\tau_1}$} (m-2-4);
\draw[->,red] (m-2-4) -- node[midway, above, sloped, font=\tiny]
  {\color{red}$\mathrm{MP}_{\sigma_2,\tau_1}$} (m-1-5);
\draw[->, blue] (m-1-6) -- node[midway, above, sloped, font=\tiny]
  {\color{blue}$\mathrm{HHL}_{\sigma_m,\tau_m}$} (m-2-7);
\end{tikzpicture}
\]
    where each $(\sigma_i,\tau_i)$ is a type I pair (blue), and each $(\sigma_{i+1},\tau_i)$ is a type II pair (red). The weight of such a path is defined as the signed product of all weights along it, where the sign depends on the length of the path. This allows us to define a collection of modified morphisms on the HHL resolution:
\begin{align}\label{eq:modifiedHHL}
        0\rightarrow\bigoplus_{\substack{\dim\sigma=k }} \mathcal{O}_{\mathcal{X}}(\sigma) \xrightarrow{\Sigma} \cdots \rightarrow \bigoplus_{\substack{\dim\sigma=1}} \mathcal{O}_{\mathcal{X}}(\sigma) \xrightarrow{\Sigma} \bigoplus_{\substack{\dim\sigma=0}} \mathcal{O}_{\mathcal{X}}(\sigma)\rightarrow0
    \end{align}
where $\Sigma = \oplus \Sigma_{\sigma,\tau}$ and $\Sigma_{\sigma,\tau}$ is defined by the sum of weights over all paths from $\sigma$ to $\tau$. Finally, combinatorial Hodge decomposition induced by the differentials in \eqref{eq:BMcomplex-in-introduction} allows us to project \eqref{eq:modifiedHHL} onto its harmonic part
\begin{align}\label{eq:min-res-introduction}
    0\rightarrow\bigoplus_{\substack{[a]\in BT }} \mathcal{O}_{\mathcal{X}}(a)^{\oplus\beta_{k,a}} \xrightarrow{d_{\min}} \cdots  \xrightarrow{d_{\min}} \bigoplus_{\substack{[a]\in BT }} \mathcal{O}_{\mathcal{X}}(a)^{\oplus\beta_{0,a}}\rightarrow0
\end{align}
where $\beta_{i,a} = \dim H^{\mathrm{BM}}_{i}(S_{[a]},\C)$, and the differential $d_{\min}$ is obtained by restricting $\Sigma$ to the harmonic part. The main result of this paper is the following.

\begin{theorem}[= Theorem \ref{thm:main}]
    The complex \eqref{eq:min-res-introduction} is the minimal resolution of $i_*\mathcal{O}_{\mathcal{Y}}$.
\end{theorem}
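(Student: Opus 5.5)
We will obtain \eqref{eq:min-res-introduction} as a strong deformation retract of the HHL complex \eqref{eq:HHL}, using the homological perturbation lemma. Since \eqref{eq:HHL} is a resolution of $i_*\mathcal{O}_{\mathcal{Y}}$ by \cite{HHL} (or \cite{BH}), the retract is also a resolution. The remaining work is to check minimality and to identify the transferred differential with the path-sum formula.

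\textbf{Splitting the differential.} First we split the HHL differential as $d = d_0 + d_1$. Here $d_0$ collects the type II components, those $(\sigma,\tau)$ with $\mathcal{O}(\sigma)=\mathcal{O}(\tau)$; on these the weight $\mathrm{HHL}_{\sigma,\tau}$ is a nonzero scalar. The part $d_1$ collects the type I components, which by definition are maps between distinct line bundles with $\mathcal{O}(\sigma)>\mathcal{O}(\tau)$. Pairs with $\mathcal{O}(\sigma)\not\geq\mathcal{O}(\tau)$ contribute nothing.

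We then check that $d_0$ is itself a differential. Because the BT poset is graded by the line bundles, $d^2=0$ decomposes into components: the component preserving the line bundle gives $d_0^2=0$. Moreover $(C_\bullet,d_0)$ is the direct sum over $[a]\in BT$ of the chain complexes \eqref{eq:BMcomplex-in-introduction} tensored with $\mathcal{O}(a)$. This needs the earlier identification that the type II HHL weights are exactly the cellular incidence numbers of $S_{[a]}$.

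\textbf{Retracting onto harmonics.} For each $[a]$, equip the cellular complex with the standard inner product in which the cells are orthonormal. Combinatorial Hodge theory gives an orthogonal decomposition
\[
\C S_{[a]} = \mathcal{H}_{[a]} \oplus \im \partial \oplus \im \partial^*,
\]
where $\mathcal{H}_{[a]}\cong H^{\mathrm{BM}}_\bullet(S_{[a]})$. We take
\begin{itemize}
\item $p$ to be the orthogonal projection onto $\mathcal{H}_{[a]}$,
\item $\iota$ to be the inclusion of $\mathcal{H}_{[a]}$,
\item $h=\partial^+$, the Moore--Penrose inverse, which is exactly the data defining the MP weights.
\end{itemize}
The identities $\partial\partial^+\partial=\partial$ and $\partial^+\partial\partial^+=\partial^+$, together with the self-adjointness of $\partial\partial^+$ and $\partial^+\partial$, give $\iota p-1=\partial h+h\partial$ and the side conditions $h^2=0$, $h\iota=0$, $ph=0$. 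Tensoring with $\mathcal{O}(a)$ and summing over $[a]$ yields a strong deformation retraction of $(C_\bullet,d_0)$ onto $\bigoplus_{[a]} \mathcal{H}_{[a]}\otimes\mathcal{O}(a)$. Signs need care and should be fixed to match the sign convention of the paths.

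\textbf{Perturbation.} We treat $d_1$ as a perturbation of $d_0$. The perturbation lemma requires $1-d_1h$ to be invertible. This holds because $d_1h$ strictly decreases the line bundle in the finite poset $BT$ (type II steps preserve it, type I steps strictly decrease it), so $d_1h$ is nilpotent and $\sum_n (d_1h)^n$ is a finite sum. The transferred differential is
\[
d_{\min}=p\, d_1\sum_{n\ge0}(hd_1)^n\,\iota .
\]
Expanding the sum gives exactly the alternating paths $\mathrm{HHL},\mathrm{MP},\mathrm{HHL},\dots,\mathrm{HHL}$, with sign $(\pm1)^m$ coming from the convention $h=\pm\partial^+$. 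Thus $\sum d_1(hd_1)^n=\Sigma$, and $d_{\min}=p\Sigma\iota$, which is the restriction of $\Sigma$ to the harmonic part. The lemma also supplies a quasi-isomorphism to $C_\bullet$, so \eqref{eq:min-res-introduction} resolves $i_*\mathcal{O}_{\mathcal{Y}}$.

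\textbf{Minimality.} Every component of $d_{\min}$ begins with a type I step, so it maps $\mathcal{O}(a)$ to $\mathcal{O}(b)$ with $[a]>[b]$. Such a map is given by a non-constant monomial and therefore lies in the irrelevant/maximal ideal. Hence $d_{\min}\otimes k=0$, which is minimality in the graded (Cox ring) sense; this also yields the Betti numbers $\beta_{i,a}$.

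\textbf{Main obstacle.} I expect the hardest step to be the identification of the type II pairs with the Borel--Moore cellular complexes, with consistent orientation signs so that $d_0^2=0$ holds on the nose. The same sign bookkeeping is needed to match the path signs in $\Sigma$.
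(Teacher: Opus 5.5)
Your proposal is correct and follows essentially the same route as the paper's proof. The steps match one for one: you split $d$ into the same-bundle part (the paper's $\gr^F d$) and the strictly decreasing part $\delta=d-\gr^F d$, contract each Borel--Moore complex onto its harmonic part via the Moore--Penrose inverse, get nilpotence from the finiteness of $BT$, and deduce minimality from the fact that every component of $\Sigma$ strictly lowers the line bundle.

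Two small corrections. First, with $h=\partial^+$ one actually has $1-\iota p=\partial h+h\partial$, so to match the convention $ip=\id+dh+hd$ the paper takes $h=-\partial^+$, and this is where the sign $(-1)^{l(\lambda)}$ comes from. Second, the entries of $d_{\min}$ are homogeneous polynomials of nonzero degree (e.g.\ $2x-\frac14(y+z)^3$), not monomials; this still puts them in the maximal ideal, so your minimality argument goes through.
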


\smallskip

\subsection{Virtual resolutions of modules over the Cox ring}

By \cite{BH,BCHSY}, the HHL resolutions admit an equivalent formulation in terms of graded virtual resolutions over the Cox ring. For simplicity, we restrict in this discussion to the case of toric varieties. This perspective may be of particular interest to readers with a background in commutative algebra. For previous results in this direction, see, for example, \cite{BES, BE, BCHS, CH, Yang}.

\smallskip

Let $S$ be the Cox ring\footnote{Here, and only in this paragraph, $S$ denotes the Cox ring. This should not be confused with the Bondal stratification denoted by $S$ above. Since the Cox ring does not appear in the main body of the paper, no ambiguity should arise.} of $X$. Recall that the Cox ring $S$ is naturally graded by the divisor class group $\mathrm{Cl}(X)$. Under the correspondence between $\mathrm{Cl}(X)$-graded $S$-modules and coherent sheaves on $X$, the HHL resolution \eqref{eq:HHL} can be seen as a \textit{virtual resolution} of the $\mathrm{Cl}(X)$-graded $S$-module $S/I_{Y}$, where $I_Y$ is the defining ideal of the toric subvariety $Y\hookrightarrow X$. Similarly, the minimal resolution \eqref{eq:min-res-introduction} can be seen as a virtual resolution
\begin{align}\label{eq:res-S-modules}
    0\rightarrow\bigoplus_{\substack{[a]\in BT }} S(a)^{\oplus\beta_{k,a}} \xrightarrow{d_{\min}} \cdots  \xrightarrow{d_{\min}} \bigoplus_{\substack{[a]\in BT }} S(a)^{\oplus\beta_{0,a}}\rightarrow0
\end{align}
of graded $S$-modules. In this language, the main result of this paper can be formulated as follows.

\begin{theorem}
    Let $Y\hookrightarrow X$ be a normal closed toric subvariety of a simplicial projective toric variety. Denote the defining ideal of $Y$ by $I_Y$. Then the complex \eqref{eq:res-S-modules} is a minimal virtual resolution of the $\mathrm{Cl}(X)$-graded $S$-module $S/I_Y$.
\end{theorem}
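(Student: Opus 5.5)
This statement is the Cox-ring translation of the main theorem (Theorem \ref{thm:main}), so the plan is to deduce it from that result through the dictionary between $\mathrm{Cl}(X)$-graded $S$-modules and coherent sheaves on $X$. Since $X$ is a simplicial toric variety, it is the coarse space of a smooth toric stack $\mathcal{X}$ with the same fan (the canonical stack). The normal toric subvariety $Y$ corresponds to a closed toric substack $\mathcal{Y}\hookrightarrow\mathcal{X}$, and $S$ is the Cox ring of both. Under the functor $M\mapsto \widetilde{M}$, the module $S(a)$ goes to $\mathcal{O}_{\mathcal{X}}(a)$ and $S/I_Y$ goes to $i_*\mathcal{O}_{\mathcal{Y}}$. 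By \cite{BH,BCHSY}, the HHL complex \eqref{eq:HHL} lifts to a complex of free graded $S$-modules. The modified differentials $\Sigma$ and their harmonic restriction $d_{\min}$ are $\C$-linear combinations of products of monomial HHL weights, so they lift to $S$-linear maps of the correct degrees. This gives the complex \eqref{eq:res-S-modules}, and sheafifying it returns \eqref{eq:min-res-introduction}.

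\textbf{Step 1: it is a virtual resolution.} Sheafification is exact and kills exactly the modules annihilated by a power of the irrelevant ideal $B$. By Theorem \ref{thm:main}, \eqref{eq:min-res-introduction} is a resolution of $i_*\mathcal{O}_{\mathcal{Y}}=\widetilde{S/I_Y}$. Hence the homology of \eqref{eq:res-S-modules} in positive degrees is $B$-torsion. The $H_0$ term sheafifies to $\widetilde{S/I_Y}$, which is the definition of a virtual resolution of $S/I_Y$. If a comparison map $S/I_Y\to H_0$ is needed, I would take it from the natural augmentation of the HHL complex onto $S/I_Y$. That augmentation is established in \cite{BH}, and it is compatible with the strong deformation retraction used to build \eqref{eq:min-res-introduction}.

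\textbf{Step 2: minimality.} Minimality means that no entry of $d_{\min}$ is a nonzero scalar. Equivalently, after tensoring with $S/\mathfrak{m}$ where $\mathfrak{m}=(x_\rho)$, the differential becomes zero. Write the source and target components of $d_{\min}$ as $S(a)$ and $S(b)$. If $[a]\neq[b]$, any nonzero entry has positive degree, so it lies in $\mathfrak{m}$. This holds because distinct Bondal–Thomsen line bundles differ and the Cox ring of a projective toric variety has only constants in degree $0$. If $[a]=[b]$, every path contributing to $\Sigma$ contains at least one HHL step, and no HHL step can occur within a single block $S_{[a]}$. So the degree-zero part of $d_{\min}$ on the $[a]$-block can only come from the cellular differential of \eqref{eq:BMcomplex-in-introduction}. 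That differential vanishes on harmonic representatives by the combinatorial Hodge decomposition. Thus all entries of $d_{\min}$ lie in $\mathfrak{m}$.

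The main obstacle is making the stack-to-variety passage precise. Specifically, I must check that the equivalence between $\mathrm{Cl}(X)$-graded modules modulo $B$-torsion and $\mathrm{Coh}(\mathcal{X})$ sends $S/I_Y$ to $i_*\mathcal{O}_{\mathcal{Y}}$ when $Y$ is merely normal, which requires identifying $I_Y$ with the toric ideal cutting out $\mathcal{Y}$ in the Cox construction. I would handle this using the description of $I_Y$ as a lattice ideal saturated by $B$, as in \cite{BH}.
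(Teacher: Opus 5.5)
Your proposal is correct and takes the same route as the paper. The paper obtains this statement simply by translating Theorem \ref{thm:main} through the dictionary of \cite{BH,BCHSY} between $\mathrm{Cl}(X)$-graded $S$-modules and sheaves on the associated smooth toric stack, with minimality because every entry of $d_{\min}$ maps between distinct, strictly comparable elements of $BT$ and so lies in $(x_1,\dots,x_n)$. One small simplification: the same-block components of $d_{\min}=p\Sigma i$ vanish outright because $d_H=0$ and every path in $\Sigma$ starts with a type I step that strictly lowers the position in $BT$, so you do not need the cellular differential or the Hodge decomposition there.
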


\smallskip

\subsection{Strategy of the proof}

We briefly explain the strategy of the proof. One of the main tools used in this paper is the homological perturbation lemma. Roughly speaking, it addresses the following question.

\begin{question}
Let $(H_{\bullet},d_H)$ and $(C_{\bullet},d_C)$ be chain complexes. Suppose that $(H_{\bullet},d_H)$ is a strong deformation retract of $(C_{\bullet},d_C)$, with homotopy map $h$ on $(C_{\bullet},d_C)$. If we introduce a ``small'' perturbation $\delta$ of the differential $d_C$, so that the perturbed differential is $d_{C,\infty}=d_C+\delta$, can one modify the differential $d_H$ accordingly so as to obtain a new complex $(H_{\bullet},d_{H,\infty})$ which is again a strong deformation retract of the perturbed complex $(C_{\bullet},d_{C,\infty})$?
\end{question}

\smallskip

The homological perturbation lemma gives an explicit formula for the perturbed differential $d_{H,\infty}$ in terms of the original data.

\smallskip

In our situation, $(C_{\bullet},d_C)$ is the associated graded complex of the HHL resolution with respect to the natural filtration induced by the poset structure on the Bondal-Thomsen collection, and $(H_{\bullet},d_H)$ is its homology complex with trivial differentials. We then consider the perturbation given by the difference between the original HHL differential and the differential on the associated graded complex. With this choice, the perturbed complex $(C_{\bullet},d_{C,\infty})$ is precisely the HHL resolution, and the resulting perturbed retract $(H_{\bullet},d_{H,\infty})$ is the desired minimal resolution.

\smallskip

There is, however, an important point in applying the homological perturbation lemma: we have not yet specified the homotopy map $h$ on $(C_{\bullet},d_C)$. In principle, the output of the perturbation lemma depends on this choice. Thus a key point is to find a canonical homotopy contracting the associated graded complex of the HHL resolution onto its homology complex. The Moore-Penrose inverse provides exactly such a choice. The advantage of using the Moore-Penrose inverse is that it does not require any auxiliary choices. As a result, the construction in this paper is canonical in the sense that the differentials are completely determined by the Bondal stratification itself. Moreover, the combinatorial nature of the Moore-Penrose inverse allows us to give a purely combinatorial formula for these differentials.

\smallskip

The homotopy $h$ used to construct the strong deformation retract is not unique. If one drops the requirement of canonicity, it is sometimes possible to choose homotopy maps of a simpler form, for example when a perfect Morse matching on the Bondal stratification is available. See the discussion at the beginning of Section \ref{sec:construction} and Remark \ref{rmk:choice-of-homotopy-2} for general discussions on alternative choices of $h$, and see the end of Section~\ref{sec:example} for an explicit example.

\smallskip

Our use of Moore-Penrose inverses is strongly inspired by the work of Eagon-Miller-Ordog \cite{EMO}. It was brought to the author's attention by Ezra Miller that the homological perturbation lemma also appears in the work of Amelotte and Briggs \cite{AB23}.

\subsection{Potential applications to commutative algebra}

Minimal free resolutions have been studied extensively in commutative algebra in various settings; see, for example, \cite{Tchernev,AB23,AB25,LMO}. It seems plausible that the approach developed in this paper, namely, combining the homological perturbation lemma with Moore-Penrose inverses, or more generally with suitable splittings or homotopies, can be applied in broader settings. For instance, starting from any kinds of cellular resolutions, one may hope to use the perturbation lemma together with Moore-Penrose inverses to minimize them while preserving the underlying combinatorics of the cellular resolutions. This perspective may be especially useful for resolutions constructed from LCM lattices, Betti posets, or related combinatorial data. We plan to investigate this direction in future work.

\subsection{Organization of the paper}

The paper is organized as follows. In Section \ref{sec:Bondal-HHL} we review the construction of the Bondal stratification and the Hanlon-Hicks-Lazarev resolution. In Section \ref{sec:MP-HPL} we review the theory of Moore-Penrose inverses and the homological perturbation lemma. In Section \ref{sec:construction} we apply the homological perturbation lemma to construct a strong deformation retract of the HHL resolution, and show that it is indeed a minimal resolution. In Section \ref{sec:differentials} we give a combinatorial description of the differentials in this minimal resolution. Finally, in Section \ref{sec:example} we give an explicit example to illustrate how to compute the minimal resolutions in practice.

\smallskip

{\bf Acknowledgements.} This work was greatly inspired by the author's joint paper \cite{BH} with Lev Borisov, and the author is deeply grateful to him for his consistent support and warm encouragement over the years. The author would like to thank Andrew Hanlon for bringing this question to his attention, for a careful reading of an earlier draft, and for valuable comments and suggestions. The author is also grateful to Mykola Sapronov for a conversation at the 2026 AMS Spring Eastern Sectional Meeting in Boston, which led the author to the work of Eagon, Miller, and Ordog \cite{EMO}. The author would further like to thank Ezra Miller for valuable suggestions that greatly improved the exposition of the paper and for pointing out useful references. Finally, the author would like to thank Daniel Erman, Mykola Sapronov and Junyan Zhao for careful reading of the paper and helpful conversations.

\section{Bondal stratification and Hanlon-Hicks-Lazarev resolutions}\label{sec:Bondal-HHL}

In this section we review the construction of the Bondal stratification and the Hanlon-Hicks-Lazarev resolution associated to a closed embedding of toric stacks. Our notation follows \cite[\S 3]{BH}, where the construction is reformulated in terms of homogeneous coordinates.

\smallskip

Rather than starting from the original stacky-fan description, we fix a quadruple
\begin{align*}
    (L,\Sigma,\psi:L\rightarrow\Lambda,G)
\end{align*}
where $L\cong\Z^n$ is a lattice with a chosen basis $v_1,\cdots,v_n$, $\Sigma$ is a subfan of the first orthant fan in \(L_{\mathbb{R}}\), $\psi:L\rightarrow \Lambda$ is a lattice morphism to a rank-$k$ lattice $\Lambda$ with finite cokernel, and $G$ is an algebraic group that maps to $\mathrm{ker}\,(L\otimes\C^* \rightarrow \Lambda\otimes\C^*)$.

\smallskip

Let $U_{\Sigma}\subset \C^n$ be the open subset consisting of points whose set of zero coordinates lies in a cone of $\Sigma$, and set $\mathcal{X}:=\mathcal{U}_{\Sigma,G}:=[U_{\Sigma}/G]$. We denote by \(D_i\) the torus-invariant divisor on \(\mathcal{X}\) cut out by the equation \(x_i=0\).

\smallskip

This description is related to the original HHL setup as follows. Suppose that $i:\mathcal{Y}\hookrightarrow\mathcal{X}$ is a closed embedding of toric stacks induced by a diagram
\begin{align*}
    \xymatrix{
        L_Y \ar@{^(->}[r] \ar[d]^{\beta_Y} & L_X \ar[d]^{\beta_X} & \\
        N_Y \ar@{^(->}[r]^{\phi} & N_X 
    }
\end{align*}
where the horizontal morphisms are embeddings of lattices, and the fan $\Sigma_{Y}$ is obtained by intersecting cones of $\Sigma_X$ with the subspace $(L_Y)_{\mathbb{R}}$. 

\smallskip

Following \cite[\S 3]{BH}, we pass to the Cox-style data by setting
\begin{align*}
    L=\mathbb{Z}^{\Sigma_X(1)},\qquad
\widetilde{\Sigma}=\{\mathrm{Cone}(e_\rho:\rho\in\sigma)\mid \sigma\in \Sigma_X\},\qquad
\Lambda=\operatorname{coker}\phi,
\end{align*}
and defining $\psi$ to be the composition
\begin{align*}
    L \rightarrow L_X \xrightarrow{\beta_X} N_X \rightarrow \operatorname{coker}\phi=\Lambda.
\end{align*}
Moreover,
\begin{align*}
    G:=\mathrm{ker}(L\otimes\C^* \to N_X\otimes\C^*)
\end{align*}
acts on $U_{\widetilde{\Sigma}}$, and the HHL complex attached to the original embedding $i$ is precisely the HHL complex associated to the input data $(L,\Sigma,\psi,G)$.

\smallskip

With this notation, we define the real torus
\begin{align*}
    T:=\operatorname{Hom}(\Lambda,\mathbb{R}/\mathbb{Z})
      =\Lambda_{\mathbb{R}}^*/\Lambda^*
      \cong (S^1)^k
\end{align*}
and its universal cover
\begin{align*}
    \Lambda_{\mathbb{R}}^*:=\operatorname{Hom}(\Lambda,\mathbb{R})\cong \mathbb{R}^k.
\end{align*}

\smallskip

\begin{definition}[Bondal stratification]\label{def:Bondal-stratification}
    Each basis vector $v_i$ of $L$ defines a linear map
\begin{align*}
    H_{i}:\ \Lambda^*_{\R} \rightarrow\R,\quad f\mapsto f(\psi(v_i)).
\end{align*}
They induce a periodic hyperplane arrangement defined by $H_i = a$ for $a\in \Z$ on $\Lambda^*_{\R}$, which descends to the torus $T$. We denote the induced stratification on $\Lambda^*_{\R}$ and $T$ by $\widetilde{S}$ and $S$ respectively, and call them the \textit{Bondal stratifications}.
\end{definition}

For a stratum $\widetilde{\sigma}$ in $\widetilde{S}$ and a point $x$ in the relative interior of $\widetilde{\sigma}$, the integer $\lceil H_i(x)\rceil$ is independent of the choice of $x$. We denote this integer by $\lceil H_i(\widetilde{\sigma})\rceil$.

\begin{definition}
    Let $\sigma \in S$ be a stratum in the Bondal stratification of the real torus $T$. Let $\widetilde{\sigma}\in \widetilde{S}$ be an arbitrary lift of $\sigma$. We define
    \begin{align*}
        \mathcal{O}_{\mathcal{X}}(\sigma):=\mathcal{O}_{\mathcal{X}}\left( - \sum_{i=1}^n \left\lceil H_{i}(\widetilde{\sigma})  \right\rceil D_{i} \right).
    \end{align*}
    Note that the isomorphism class of the bundle does not depend on the choice of the lift $\widetilde{\sigma}$. We call the set of isomorphism classes of such line bundles  the \textit{Bondal-Thomsen collection} of $\mathcal{X}$, and denote it by $BT$.
\end{definition}

\begin{notation}
For simplicity we will denote a torus-invariant line bundle on $\mathcal{X}$ by $\mathcal{O}(a)$ where $a\in\Z^n$. The isomorphism class of $\mathcal{O}(a)$ only depends on the image $[a]$ of $a$ under the linear map $\psi: L\rightarrow\Lambda$.
\end{notation}

\begin{definition}
    We denote the set of cells $\sigma$ in the Bondal stratification $S$ whose associated line bundle $\mathcal{O}(\sigma)$ has isomorphism class $[a]\in BT$ by $S_{[a]}$.
\end{definition}

Now we define morphisms $\mathcal{O}(\sigma)\rightarrow\mathcal{O}(\tau)$. We fix an arbitrary choice of orientations for all strata in the Bondal stratification on the torus $T$. Note that this also induces orientations for all lifts of the strata in the universal cover $\Lambda^*_{\R}$. Let $\sigma$ be an $m$-dimensional stratum in $S$, and $\tau$ an $(m-1)$-dimensional facet of $\sigma$. We take an arbitrary lift $\widetilde{\sigma}$ of $\sigma$ in the universal cover $\Lambda^*_{\R}$, and look at all facets of $\widetilde{\sigma}$ that are mapped to $\tau$ under the quotient map. Let $\widetilde{\tau}$ be such a facet. We denote $\epsilon_{i}:=\left\lceil H_{i}(\widetilde{\sigma})  \right\rceil - \left\lceil H_{i}(\widetilde{\tau})  \right\rceil $ and define
\begin{align*}
    \mathcal{O}(\sigma)\xrightarrow{\mathrm{sgn}(\widetilde{\sigma},\widetilde{\tau})\prod x_{i}^{\epsilon_{i}}}\mathcal{O}(\tau)
\end{align*}
where $\mathrm{sgn}(\widetilde{\sigma},\widetilde{\tau})$ equals $1$ if the orientations on $\widetilde{\sigma}$ and $\widetilde{\tau}$ are compatible and $-1$ otherwise.

\smallskip

\begin{definition}
    We call the complex of line bundles on $\mathcal{X}$ 
    \begin{align}\label{eq-HHL-complex}
        C_{\bullet}:\ 0\rightarrow\bigoplus_{\substack{\dim\sigma=k }} \mathcal{O}_{\mathcal{X}}(\sigma) \xrightarrow{d} \cdots \xrightarrow{d} \bigoplus_{\substack{\dim\sigma=1}} \mathcal{O}_{\mathcal{X}}(\sigma) \xrightarrow{d} \bigoplus_{\substack{\dim\sigma=0}} \mathcal{O}_{\mathcal{X}}(\sigma)\rightarrow0
    \end{align}
    the \textit{HHL complex} associated to the closed embedding $i:\mathcal{Y}\hookrightarrow \mathcal{X}$, where the differentials $d$ are given by the direct sum of all $\mathcal{O}(\sigma)\rightarrow\mathcal{O}(\tau)$ defined as above.
\end{definition}

\smallskip

The main result of \cite{HHL} is the following.
\begin{theorem}\label{thm:HHL}
    The HHL complex is a locally free resolution of the pushforward $i_*\mathcal{O}_{\mathcal{Y}}$.
\end{theorem}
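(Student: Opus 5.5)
The plan is to check exactness locally in the Cox-style description. There the complex becomes a direct sum, over torus weights, of cellular chain complexes of convex regions in the universal cover $\Lambda^*_{\R}$. Since every term of $C_\bullet$ is a line bundle, the complex is automatically locally free, so only two things need proof: exactness in positive degrees, and that the cokernel of the last map is $i_*\mathcal{O}_{\mathcal{Y}}$. Both are local on $\mathcal{X}=[U_\Sigma/G]$. I would therefore pull back to $U_\Sigma$ and work $G$-equivariantly, and then cover $U_\Sigma$ by the affine charts $U_c=\{x_i\neq 0 \text{ for } i\notin c\}$, one for each maximal cone $c\in\Sigma$. On $U_c$ the coordinate ring is $R_c=\C[x_i, x_j^{\pm1} : i\in c,\ j\notin c]$. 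Each $\mathcal{O}(\sigma)$ is free of rank one, and the whole complex is $\Z^n$-graded by monomial exponents. So it suffices to prove exactness of the degree-$m$ strand for each $m\in\Z^n$.

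Next I would identify the degree-$m$ strand combinatorially. Replacing $\sigma$ by its lifts $\widetilde\sigma\in\widetilde S$ unfolds the direct sum $\bigoplus_\sigma \mathcal{O}(\sigma)$ into a sum over lifts, with $\Lambda^*$ acting by deck transformations. Changing the lift shifts the ceiling vector $(\lceil H_i(\widetilde\sigma)\rceil)$ by a vector in the image of $\psi^*$, which corresponds to twisting by a $G$-trivial character. Taking $G$-invariants in a fixed class of weights therefore lets me fix one lift per monomial. Concretely, the monomial $x^m$ lies in the twisted copy of $R_c$ attached to $\widetilde\sigma$ exactly when $\lceil H_i(\widetilde\sigma)\rceil\le m_i$ for all $i\in c$; there is no condition for $j\notin c$, because those variables are inverted. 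The differentials are the monomials $\prod x_i^{\epsilon_i}$ together with the orientation signs. With these identifications, the degree-$m$ strand becomes the cellular chain complex (in the usual sense) of the subcomplex of $\widetilde S$ consisting of all strata contained in
\[
P_m=\{x\in\Lambda^*_{\R} : H_i(x)\le m_i \ \text{for all } i\in c\}.
\]
This $P_m$ is a closed convex polyhedron and a union of closed cells of the periodic arrangement: $H_i(x)\le m_i$ is equivalent to $\lceil H_i\rceil\le m_i$ on relative interiors, and closedness under taking faces follows from $\epsilon_i\ge 0$.

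From here the homology is immediate. A nonempty closed convex polyhedron is contractible, so its cellular homology is $\C$ in degree $0$ and zero elsewhere; if $P_m$ is empty, the strand is zero. This gives exactness of $C_\bullet$ in positive degrees. For $H_0$, I would compare with the coordinate ring of $\mathcal{Y}\cap U_c$. Its ideal is generated by the binomials $x^{u}-x^{u'}$ with $\psi(u)=\psi(u')$, which are exactly the relations identifying the degree-$0$ copies attached to different lifts. Using the augmentation $\bigoplus_{\dim\sigma=0}\mathcal{O}(\sigma)\to\mathcal{O}_{\mathcal{X}}\to i_*\mathcal{O}_{\mathcal{Y}}$, I would check, weight by weight, that each weight of $i_*\mathcal{O}_{\mathcal{Y}}|_{U_c}$ is one-dimensional precisely when $P_m\neq\emptyset$. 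Here the finite cokernel of $\psi$ and the normality of the subtorus are what make the weights of the coordinate ring of $\mathcal{Y}$ correspond exactly to the nonempty polyhedra $P_m$ up to $\Lambda^*$.

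I expect the main obstacle to be the bookkeeping in the second step. Several things must be checked: that summing over $\Lambda^*$-translates of lifts, followed by taking $G$-invariants, really produces a single copy of the cellular complex of $P_m$; that the signs $\mathrm{sgn}(\widetilde\sigma,\widetilde\tau)$ give the standard cellular boundary; and that the strata meeting $P_m$ are exactly the closed cells, so no half-open boundary cells spoil contractibility. I would handle the ceiling conventions first on a one-dimensional example ($k=1$), where $P_m$ is a closed ray or interval, before doing the general case.
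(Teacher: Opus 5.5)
There is no in-paper proof to compare with. The paper quotes this theorem from \cite{HHL} and points to \cite{BH} for a simplified proof in the homogeneous-coordinate setting you are using. Judged on its own, your route is sound and amounts to a complete proof: pass to the charts $U_c$, split by weights, identify each weight strand with the cellular chains of the closed convex subcomplex $P_m$ of $\widetilde S$, use contractibility, then compare $H_0$ with $i_*\mathcal{O}_{\mathcal{Y}}$ weight by weight. Three statements need correcting, however.

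\emph{The grading.} $C_\bullet$ is not $\Z^n$-graded. When a lift of $\sigma$ has two facets over the same $\tau$, the differential is a binomial; in the $\PP(3,1,1)$ example, $\mathcal{O}(E_1)\to\mathcal{O}(V_1)$ is $y-z$. The complex is graded only by $\Z^n/\psi^*(\Lambda^*)$, the character group of $K=\ker(L\otimes\C^*\to\Lambda\otimes\C^*)$. You should split by all $K$-weights rather than take $G$-invariants, since $G$-weights are in general coarser. Fix a representative $m$ of a $K$-weight and write $c(\widetilde\sigma)=(\lceil H_i(\widetilde\sigma)\rceil)_i$. Then the monomials $x^{m-c(\widetilde\sigma)}e_\sigma$ of that weight are in bijection with the lifts $\widetilde\sigma\subseteq P_m$, because $\psi^*$ is injective. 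The two terms of a binomial differential go to two distinct lifts of $\tau$. This is the precise content of ``one lift per monomial''. It makes the strand literally the finitely supported cellular chains of $P_m$; the cells are bounded because the $\psi(v_i)$ span $\Lambda_\R$.

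\emph{The ideal of $\mathcal{Y}$.} The binomials are $x^u-x^{u'}$ with $u-u'\in\psi^*(\Lambda^*)$, not $\psi(u)=\psi(u')$. The latter give the lattice ideal of $\ker\psi$, which has complementary rank. In the example $\ker\psi=\Z(3,1,1)$, which produces $x^3yz-1$ instead of $(y-z,x-y^3)$. Your own description of these relations as the ones identifying different lifts is the correct one.

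\emph{The augmentation.} There is no canonical map through $\mathcal{O}_{\mathcal{X}}$: different lifts of $V_2$ give $z^2$ or $yz$. Moreover, the composite $C_1\to C_0\to\mathcal{O}_{\mathcal{X}}$ is not zero, since it sends $E_1$ to $y-z$. Instead define $e_\sigma\mapsto x^{c(\widetilde\sigma)}|_{\mathcal{Y}}$ directly into $i_*\mathcal{O}_{\mathcal{Y}}$; this is independent of the lift there. The $H_0$ step is then Farkas' lemma: $P_m\neq\emptyset$ if and only if $\sum_{i\in c}\lambda_i m_i\ge 0$ for every $\lambda\in\R_{\ge 0}^{c}$ with $\sum_{i\in c}\lambda_i\psi(v_i)=0$. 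By normality, this is exactly regularity of $x^m$ on the chart of $\mathcal{Y}$. Applied to $m=c(\widetilde\sigma)$, it also shows the augmentation is well defined.
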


\smallskip

\section{Moore-Penrose inverses and homological perturbation lemma}\label{sec:MP-HPL}

\subsection{Moore-Penrose inverses}\label{subsec:MP-inverses}

In this section we briefly review the theory of Moore-Penrose inverses and their combinatorics. For a more detailed treatment, we refer the reader to \cite{EMO}. Here we will only include the minimal amount of material that is necessary for our later applications.

\smallskip

\begin{definition}
    Let $\partial:\C^n\rightarrow\C^m$ be a linear map, where we denote the standard bases of $\C^n$ and $\C^m$ by $\{e_i\}$ and $\{f_j\}$ respectively. We define a \textit{hedge} $ST$ for $\partial$ to be a pair $(S,T)$, where $S\subseteq\{f_i\}$ and $T\subseteq\{e_i\}$ with $|S| = |T| = \rk(\partial)$, such that the following compositions
\begin{align*}
    \C\{T\} \hookrightarrow \C^n \xrightarrow{\partial} \im\partial,\qquad \im\partial\hookrightarrow \C^m \twoheadrightarrow \C\{S\}
\end{align*}
are isomorphisms, where $\C\{T\}$ and $\C\{S\}$ denote the subspaces spanned by the corresponding subsets of basis vectors. We call $S$ a \textit{stake set}, and call $T$ a \textit{shrubbery}. Given a hedge $ST$ for $\partial$, we define
\begin{align*}
    \partial_{S\times T}:\ \C\{T\} \rightarrow \C\{S\}
\end{align*}
to be the restriction of $\partial$ to $\C\{T\}\subseteq \C^n$ and $\C\{S\}\subseteq \C^m$. Clearly $\partial_{S\times T}$ is invertible.
\end{definition}

\smallskip

\begin{definition}
    Let $\partial:\C^n\rightarrow\C^m$ be a linear map and fix a hedge $ST$ for $\partial$. Denote the complement of $S$ in $\{f_i\}$ by $\overline{S}$. We define the \textit{hedge splitting} 
    \begin{align*}
        \partial_{ST}^+: \C^m\rightarrow\C^n
    \end{align*} 
    by its action on the basis $\overline{S}\cup\partial T$:
    \begin{itemize}
        \item $\partial_{ST}^+(\sigma) = 0$ for any $\sigma\in\overline{S}$.
        \item $\partial_{ST}^+(\partial \tau) = \tau$ for any $\tau\in T$.
    \end{itemize}
\end{definition}

\smallskip

\begin{definition}
    The \textit{Moore-Penrose inverse} $\partial^+$ of $\partial$ is defined to be the unique linear map $\C^m\rightarrow\C^n$ satisfying the following four conditions:
    \begin{align*}
        \partial\partial^+\partial = \partial,\quad \partial^+\partial\partial^+ = \partial^+,\quad (\partial\partial^+)^* = \partial\partial^+,\quad (\partial^+\partial)^* = \partial^+\partial
    \end{align*}
    where $*$ is the conjugate transpose.
\end{definition}

\begin{remark}
    Equivalently, the Moore-Penrose inverse $\partial^+$ of $\partial$ is the unique linear map $\C^m\rightarrow\C^n$ such that (1) it inverts $\partial$ on $(\ker \partial)^{\perp}$; (2) it is zero on $(\im \partial)^{\perp}$.
\end{remark}

\smallskip

The next theorem allows one to compute the Moore-Penrose inverse as the sum of hedge splittings over all hedges for the linear map.

\smallskip

\begin{theorem}[Hedge Formula]
    The Moore-Penrose inverse $\partial^+$ of $\partial$ is given by the following formula
    \begin{align*}
        \partial^+ = \frac{1}{\Delta}\sum_{ST}|\det\partial_{S\times T}|^2 \partial_{ST}^+.
    \end{align*}
    where $\Delta$ is defined as $\sum_{ST}|\det\partial_{S\times T}|^2$, and the sums are taken over all hedges $ST$ for $\partial$.
\end{theorem}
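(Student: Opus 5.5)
The plan is to check that the weighted average on the right-hand side satisfies the two characterizing properties in the Remark following the definition of the Moore-Penrose inverse. Write $r=\rk\partial$ and $P:=\frac{1}{\Delta}\sum_{ST}|\det\partial_{S\times T}|^2\partial_{ST}^+$. Then $\partial^+=P$ once we show that (1) $\partial P\partial=\partial$ together with $\im P\subseteq(\ker\partial)^\perp$, and (2) $P$ vanishes on $(\im\partial)^\perp$.

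The first step is a fact about a single hedge. Each $\partial^+_{ST}$ is a generalized inverse: for $\tau\in T$ we have $\partial\partial^+_{ST}(\partial\tau)=\partial\tau$, and since $\partial(\C\{T\})=\im\partial$, this gives $\partial\partial_{ST}^+\partial=\partial$. The identity is affine in the splitting, and the weights $|\det\partial_{S\times T}|^2/\Delta$ sum to $1$, so $\partial P\partial=\partial$ follows.

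The real work is in showing that $P$ kills $(\im\partial)^\perp$ and lands in $(\ker\partial)^\perp$. No individual hedge splitting has these properties; only the weighted sum does. My approach is to factor $\partial=AB$ with $B:\C^n\to\C^r$ surjective and $A:\C^r\to\C^m$ injective, for instance by choosing a basis of $\im\partial$. For a hedge $ST$, let $A_S$ be the $r\times r$ submatrix of $A$ with rows $S$, and $B_T$ the submatrix of $B$ with columns $T$. Then:
\begin{itemize}
\item $\partial_{S\times T}=A_SB_T$, so $\det\partial_{S\times T}=\det A_S\det B_T$;
\item $(S,T)$ is a hedge exactly when both $A_S$ and $B_T$ are invertible;
\item the splitting factors as $\partial^+_{ST}=\iota_T B_T^{-1}A_S^{-1}\pi_S$, where $\pi_S$ is the coordinate projection onto $\C\{S\}$ and $\iota_T$ is the inclusion of $\C\{T\}$.
\end{itemize}
One checks this formula directly on $\partial T$ and on $\overline{S}$. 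Consequently the sum decouples as
\[
P=\Big(\tfrac{1}{\Delta_B}\sum_T|\det B_T|^2\,\iota_TB_T^{-1}\Big)\Big(\tfrac{1}{\Delta_A}\sum_S|\det A_S|^2\,A_S^{-1}\pi_S\Big),
\]
with $\Delta=\Delta_A\Delta_B$.

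It then suffices to prove two full-rank statements. The left factor should equal $B^*(BB^*)^{-1}=B^+$, and the right factor should equal $(A^*A)^{-1}A^*=A^+$. These are the classical Cauchy–Binet formulas for least squares, which I would prove with Cramer's rule. Take the entry of $\sum_S|\det A_S|^2A_S^{-1}\pi_S$ in position $(j,s)$. It equals $\sum_S\overline{\det A_S}$ times a cofactor of $A_S$. By Cauchy–Binet this is the corresponding cofactor expression for $A^*A$, namely $\operatorname{adj}(A^*A)A^*$. Since $\Delta_A=\det(A^*A)$, also by Cauchy–Binet, the right factor is $(A^*A)^{-1}A^*$, and the left factor is handled the same way.

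Finally, $B^+A^+$ kills $\ker A^*=(\im\partial)^\perp$ and has image in $\im B^*=(\ker\partial)^\perp$. This gives both properties, so $P=\partial^+$. I expect the main obstacle to be this Cramer/Cauchy–Binet bookkeeping, where the cofactor signs and the complex conjugates have to be tracked carefully, together with checking that the factorization of $\partial^+_{ST}$ is independent of the chosen factorization $\partial=AB$.
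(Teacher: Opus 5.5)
Your proof is correct, but it takes a different route from the paper. The paper gives no argument of its own and simply cites Corollary 5.9 of Eagon--Miller--Ordog. You instead give a self-contained proof by the classical full-rank-factorization argument, which checks out step by step:
\begin{itemize}
\item From $\partial=AB$, the hedges are exactly the pairs $(S,T)$ with $A_S$ and $B_T$ both invertible.
\item For each hedge, $\partial^+_{ST}=\iota_TB_T^{-1}A_S^{-1}\pi_S$.
\item The weighted sum therefore factors, with $\Delta=\det(A^*A)\det(BB^*)$.
\item The Cramer/Cauchy--Binet identity $\det(A^*A)\,\bigl((A^*A)^{-1}A^*\bigr)_{js}=\sum_{S\ni s}\overline{\det A_S}\,\det A_S\,(A_S^{-1})_{j,\mathrm{pos}_S(s)}$ is right. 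Replace column $j$ of $A$ by $e_s$ and apply Cauchy--Binet to $A^*$ times that matrix.
\item The $B$-factor follows by applying the same identity to $B^*$ and taking adjoints.
\item Finally, $B^*(BB^*)^{-1}(A^*A)^{-1}A^*$ kills $\ker A^*=(\im\partial)^\perp$ and lands in $\im B^*=(\ker\partial)^\perp$, which pins it down as $\partial^+$.
\end{itemize}

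Two of your stated worries are unnecessary. First, $\partial^+_{ST}$ and the weighted sum are defined intrinsically, and you verify the factored formula for $\partial^+_{ST}$ directly for any factorization. So there is no independence-of-factorization issue. Second, the affine step $\partial P\partial=\partial$ is subsumed by the final identification of $P$.

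As for the trade-off: the citation keeps the paper short and ties it to Eagon--Miller--Ordog's combinatorial framework of hedges. Your route makes the result self-contained. It also exposes a structural fact the citation hides: stake sets and shrubberies can be chosen independently, which is why $\Delta$ factors as a product of two Gram determinants.
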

\begin{proof}
    See \cite[Corollary 5.9]{EMO}.
\end{proof}

For later use, we need the following formula for the entries of the Moore-Penrose inverse $\partial^+$.

\begin{proposition}\label{prop:MP-inverse-entry}
    Let $\partial:\C^n\rightarrow\C^m$ be a linear map and $\partial^+$ be its Moore-Penrose inverse. Then the entry of $\partial^+$ at the $i$-th row and $j$-th column is given by the following formula
    \begin{align*}
        \partial^+_{ij} = \frac{1}{\Delta}\sum_{\substack{ ST \text{ hedge} \\ j\in S, i\in T}} (-1)^{a+b} \overline{\det(\partial_{S\times T})}\det(\partial_{S\backslash\{j\}\times T\backslash\{i\}}).
    \end{align*}
    where $a$ and $b$ are the positions of $i$ in $T$ and $j$ in $S$ respectively.
\end{proposition}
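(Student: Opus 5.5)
The plan is to start from the Hedge Formula and read off the $(i,j)$ entry of each hedge splitting $\partial^+_{ST}$. Fix a hedge $ST$ and write $\partial^+_{ST}$ as a matrix in the standard bases. Since $\partial^+_{ST}(\tau)$ is defined only on the basis $\overline{S}\cup\partial T$, the first step is to express the standard basis vector $f_j$ in that basis.

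\textbf{Case $j\notin S$.} Here $f_j\in\overline{S}$, so $\partial^+_{ST}(f_j)=0$ and the $j$-th column vanishes.

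\textbf{Case $j\in S$.} Write $f_j=\sum_{\tau\in T}c_\tau\,\partial\tau+(\text{element of }\C\{\overline{S}\})$. Projecting onto $\C\{S\}$ gives $f_j=\partial_{S\times T}(\sum c_\tau \tau)$, i.e.\ $c=\partial_{S\times T}^{-1}f_j$. Then $\partial^+_{ST}(f_j)=\sum_\tau c_\tau\tau$. Hence the $(i,j)$ entry of $\partial^+_{ST}$ is $(\partial_{S\times T}^{-1})_{ij}$ when $i\in T$ and $j\in S$, and $0$ otherwise. (If $i\notin T$, the image lies in $\C\{T\}$.)

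Next, apply Cramer's rule / the adjugate formula. This gives
$$(\partial_{S\times T}^{-1})_{ij}=\frac{(-1)^{a+b}\det(\partial_{S\backslash\{j\}\times T\backslash\{i\}})}{\det\partial_{S\times T}},$$
where $a$ and $b$ are the positions of $i$ in $T$ and $j$ in $S$. Note that rows of $\partial_{S\times T}$ are indexed by $S$ and columns by $T$, so the inverse's $(i,j)$ entry is the $(j,i)$ cofactor divided by the determinant; the sign is symmetric in $a,b$.

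Finally, multiply by the weight $|\det\partial_{S\times T}|^2=\det\partial_{S\times T}\cdot\overline{\det\partial_{S\times T}}$ from the Hedge Formula. One factor of $\det$ cancels the denominator, leaving
$$(-1)^{a+b}\,\overline{\det\partial_{S\times T}}\,\det\partial_{S\backslash\{j\}\times T\backslash\{i\}}.$$
Summing over hedges with $j\in S$ and $i\in T$ and dividing by $\Delta$ yields the claimed formula.

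The only delicate points are bookkeeping: the convention for ordering $S$ and $T$ (fixed orderings inherited from the standard bases), and confirming that the restriction to $\C\{S\}$ determines $c$ uniquely. The latter follows from the hedge condition that $\im\partial\to\C\{S\}$ is an isomorphism.
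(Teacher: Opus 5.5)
Your proposal is correct and follows essentially the same route as the paper. In both, the $(i,j)$ entry of each hedge splitting $\partial^+_{ST}$ is identified with an entry of $\partial_{S\times T}^{-1}$ by decomposing $f_j$ along $\C\{\overline{S}\}\oplus\partial(\C\{T\})$; the adjugate formula is then applied, and one factor of $\det\partial_{S\times T}$ in the weight $|\det\partial_{S\times T}|^2$ cancels the denominator.
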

\begin{proof}
    Denote the standard basis vectors of $\C^n$ and $\C^m$ by $\{e_i\}$ and $\{f_j\}$ respectively. Then the entry $\partial^+_{ij}$ is given by
    \begin{align*}
        \partial^+_{ij} = \langle e_i, \partial^+(f_j) \rangle = \frac{1}{\Delta}\sum_{ST}|\det\partial_{S\times T}|^2 \langle e_i, \partial_{ST}^+(f_j) \rangle
    \end{align*}
    according to the hedge formula. Now we claim
    \begin{align*}
        \langle e_i, \partial_{ST}^+(f_j) \rangle = (\partial_{S\times T}^{-1})_{\mathrm{pos}_T(i),\mathrm{pos}_S(j)}
    \end{align*}
    if $j\in S$ and $i\in T$ and is zero otherwise, where $\mathrm{pos}_T(i)$ and $\mathrm{pos}_S(j)$ are the positions of $i$ in $T$ and $j$ in $S$ respectively. To see this, first note that if $j\not\in S$ then by definition $\partial_{ST}^+(f_j)=0$, hence we assume $j\in S$. Since $\partial_{S\times T}:\C\{T\}\rightarrow\C\{S\}$ is an isomorphism, there exists a unique $x_j\in \C\{T\}$ such that $\partial_{S\times T}(x_j)=f_j$. In matrix form, we have
    \begin{align*}
        x_j = \sum_{i\in T}(\partial_{S\times T}^{-1})_{\mathrm{pos}_T(i),\mathrm{pos}_S(j)} e_i.
    \end{align*}
    Consider $f_j = (f_j - \partial x_j) + \partial x_j$. By construction the first term is in $\C\{\overline{S}\}$, and the second in the span of $\partial(T)$. Applying $\partial_{ST}^{+}$ we get
    \begin{align*}
        \partial_{ST}^{+}(f_j) = 0 + \partial_{ST}^{+}(\partial x_j)= x_j = \sum_{i\in T}(\partial_{S\times T}^{-1})_{\mathrm{pos}_T(i),\mathrm{pos}_S(j)} e_i
    \end{align*}
    and the claim follows. The proposition now follows from the adjugate matrix formula
    \begin{align*}
        (\partial_{S\times T}^{-1})_{\mathrm{pos}_T(i),\mathrm{pos}_S(j)} = (-1)^{a+b} \frac{\det\left( \partial_{S\backslash\{j\} \times T\backslash\{i\}} \right)}{\det\left( \partial_{S\times T} \right)}.
    \end{align*}
\end{proof}

\subsection{Homological perturbation lemma}\label{subsec:HPL}

In this section, we review some basic facts about the homological perturbation lemma. All materials are standard, but we include a detailed statement and proof for the sake of completeness. The proof provided here is inspired by \cite{HPL} and is completely formal. See also \cite{CLZ}.

\smallskip

\begin{definition}
    A \textit{strong deformation retract datum} (SDR datum)
    \[
\begin{tikzpicture}[>=Stealth,baseline=(current bounding box.center)]
  \node (L) at (0,0) {$ (H_{\bullet}, d_H)$};
  \node (M) at (2.2,0) {$ (C_{\bullet}, d_C)$};

  \draw[->] ([yshift=3pt]L.east) -- node[above] {$i$} ([yshift=3pt]M.west);
  \draw[->] ([yshift=-3pt]M.west) -- node[below] {$p$} ([yshift=-3pt]L.east);

  \node (H) at (3.0,0) {};
  \draw[->] (H) edge[loop right, min distance=10mm] node[right] {$h$} (H);
\end{tikzpicture}
\]
consists of two chain complexes $(H_{\bullet}, d_H)$ and $(C_{\bullet}, d_C)$, two chain maps $i$ and $p$, and a homotopy $h: C_{\bullet} \rightarrow C_{\bullet}[1]$ such that
\begin{itemize}
    \item $pi = \id_{H}$,
    \item $ip = \id_{C} + d_C h + h d_C$, i.e., $h$ is a homotopy between $ip$ and $\id_C$,
    \item $h^2 = 0$, $hi = 0$ and $ph = 0$.
\end{itemize}
\end{definition}

\begin{definition}
    A \textit{perturbation} of a SDR datum is a map $\delta: C_{\bullet} \rightarrow C_{\bullet}[-1]$ such that $d_C + \delta$ is a differential, and $h\delta$ is nilpotent.
\end{definition}

\begin{theorem}[Homological perturbation lemma]\label{thm:HPL}
    Let $(H_{\bullet}, C_{\bullet}, i, p, h)$ be a strong deformation retract datum and $\delta: C_{\bullet} \rightarrow C_{\bullet}[-1]$ a perturbation. We define
    \begin{itemize}
        \item $\Sigma = \sum_{i=0}^{\infty} \delta(h\delta)^i$,
        \item $d_{H,\infty}=d_H + p\Sigma i$,
        \item $d_{C,\infty} = d_C + \delta,$
        \item $i_{\infty} = i + h\Sigma i$,
        \item $p_{\infty} = p + p\Sigma h$,
        \item $h_{\infty} = h + h\Sigma h$.
    \end{itemize}
    Then the following datum
        \[
\begin{tikzpicture}[>=Stealth,baseline=(current bounding box.center)]
  \node (L) at (0,0) {$ (H_{\bullet}, d_{H,\infty})$};
  \node (M) at (3.2,0) {$ (C_{\bullet}, d_{C,\infty})$};

  \draw[->] ([yshift=3pt]L.east) -- node[above] {$i_{\infty}$} ([yshift=3pt]M.west);
  \draw[->] ([yshift=-3pt]M.west) -- node[below] {$p_{\infty}$} ([yshift=-3pt]L.east);

  \node (H) at (4.2,0) {};
  \draw[->] (H) edge[loop right, min distance=10mm] node[right] {$h_{\infty}$} (H);
\end{tikzpicture}
\]
    is again a strong deformation retract datum. In particular, the complex $(H_{\bullet}, d_{H,\infty})$ is homotopy equivalent to the complex $(C_{\bullet}, d_C + \delta)$.
\end{theorem}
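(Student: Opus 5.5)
The plan is to verify the SDR identities directly, using only the algebra of the maps involved. We write $A:=h\delta$ and $B:=\delta h$.

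Nilpotence of $h\delta$ makes $\sum_i (h\delta)^i=(1-A)^{-1}$ a finite sum. It also makes $\delta h$ nilpotent, since $(\delta h)^{N+1}=\delta (h\delta)^N h$. Hence
\begin{align*}
\Sigma=\delta(1-A)^{-1}=(1-B)^{-1}\delta .
\end{align*}
From this we get the two recursions
\begin{align*}
\Sigma=\delta+\delta h\Sigma=\delta+\Sigma h\delta ,
\end{align*}
which will be the workhorse throughout.

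Next we record the consequences of $(d_C+\delta)^2=0$ and $d_C^2=0$, namely
\begin{align*}
d_C\delta+\delta d_C+\delta^2=0 .
\end{align*}
We also record the SDR relations: $d_Ci=id_H$, $pd_C=d_Hp$, $d_Ch+hd_C=ip-1$, and the side conditions $h^2=hi=ph=0$, $pi=1$.

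The easy identities come first.
\begin{itemize}
\item $p_\infty i_\infty=(p+p\Sigma h)(i+h\Sigma i)=pi+p\Sigma hi+ph\Sigma i+p\Sigma h^2\Sigma i=1$, by the side conditions.
\item $h_\infty i_\infty=0$, $p_\infty h_\infty=0$ and $h_\infty^2=0$ expand the same way. Each term contains $hi$, $ph$ or $h^2$, with one exception: a term $h\Sigma h h\Sigma h$, which also vanishes since it contains $h^2$.
\end{itemize}

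The substantive steps are the chain-map and homotopy identities. The key lemma is
\begin{align*}
d_C\Sigma+\Sigma d_C+\Sigma(ip-1)\Sigma \;\;=\;\; -\,\Sigma\, ip\,\Sigma\ +\ \ldots
\end{align*}
stated more cleanly as the identity
\begin{align*}
d_{C}\Sigma+\Sigma d_C = -\Sigma\,(d_Ch+hd_C)\,\Sigma-\ldots
\end{align*}
Rather than guess the shape, the intended route is to compute
\begin{align*}
(1-B)\,(d_C\Sigma+\Sigma d_C+\Sigma\, ip\,\Sigma-\Sigma\Sigma\ \text{terms})\,(1-A)
\end{align*}
and show it is a multiple of the relation $d_C\delta+\delta d_C+\delta^2=0$. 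The standard outcome, which is the version to be proved, is
\begin{align*}
d_C\Sigma+\Sigma d_C=\Sigma\,(1-ip)\,\Sigma-\ldots,
\end{align*}
more precisely
\begin{align*}
\Sigma\, ip\,\Sigma=\Sigma d_C+d_C\Sigma+\Sigma h d_C\Sigma\cdots .
\end{align*}
In practice I would prove the single identity
\begin{align*}
(d_C+\delta)(1+h\Sigma)=(1+h\Sigma)d_C+i\,p\Sigma+\ldots
\end{align*}
by multiplying through by $(1-A)$ and $(1-B)$ and cancelling using $\Sigma(1-A)=\delta$.

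With the key identity in hand, the remaining claims follow:
\begin{itemize}
\item $d_{H,\infty}^2=0$;
\item $d_{C,\infty}i_\infty=i_\infty d_{H,\infty}$;
\item $p_\infty d_{C,\infty}=d_{H,\infty}p_\infty$;
\item $i_\infty p_\infty-1=d_{C,\infty}h_\infty+h_\infty d_{C,\infty}$.
\end{itemize}
For instance, for the homotopy identity, expand $d_{C,\infty}h_\infty+h_\infty d_{C,\infty}$. Replace $d_Ch+hd_C$ by $ip-1$, and simplify $\delta h+\delta h\Sigma h=\Sigma h$ (and symmetrically $h\delta+h\Sigma h\delta=h\Sigma$) using the recursions. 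This should leave $ip+ih\ldots$, matching $i_\infty p_\infty-1$ after using the key identity sandwiched between $h$'s.

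The main obstacle is bookkeeping in the key identity, and in $d_{H,\infty}^2=0$ in particular. A cleaner alternative I would try first is the standard trick: $p_\infty i_\infty=1$ together with the homotopy identity and $d_{C,\infty}^2=0$ formally imply the chain-map properties and $d_{H,\infty}^2=0$. If that route stalls, I would fall back on direct expansion.

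The homotopy equivalence claim is immediate once the datum is an SDR.
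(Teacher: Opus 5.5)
Your architecture matches the paper's: the recursions $\Sigma=\delta+\delta h\Sigma=\delta+\Sigma h\delta$, a single key identity obtained by conjugating with $1-\delta h$ and $1-h\delta$ and reducing to $(d_C+\delta)^2=0$, and then expansion of the SDR identities. Your checks of $p_\infty i_\infty=1$ and of the side conditions are fine.

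The gap is that the key identity, which is the only nontrivial input, is never actually stated. Your several ``$\ldots$'' versions are not consistent with one another. The version $\Sigma\,ip\,\Sigma=\Sigma d_C+d_C\Sigma+\cdots$ has the wrong sign. The computation you propose, conjugating $d_C\Sigma+\Sigma d_C+\Sigma\,ip\,\Sigma$ \emph{minus} the $\Sigma\Sigma$ terms, would not close up: removing the $\Sigma^2$ contribution leaves $\delta d_C+d_C\delta=-\delta^2$, which need not vanish. The correct identity has no extra terms:
\begin{align*}
\Sigma\, ip\,\Sigma+\Sigma d_C+d_C\Sigma=0 .
\end{align*}
To prove it, substitute $ip=1+d_Ch+hd_C$ and \emph{keep} the $\Sigma^2$; the left side becomes $\Sigma^2+\Sigma d_C(1+h\Sigma)+(1+\Sigma h)d_C\Sigma$. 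Multiply by $1-\delta h$ on the left and by $1-h\delta$ on the right. Using $(1-\delta h)\Sigma=\delta=\Sigma(1-h\delta)$ and $(1-\delta h)(1+\Sigma h)=1=(1+h\Sigma)(1-h\delta)$, you get $\delta^2+\delta d_C+d_C\delta=0$. Invertibility of the two factors then finishes the proof. It is exactly the $\Sigma^2$ term that supplies the $\delta^2$.

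With this identity in hand, every item on your list is a one-line expansion. Up to sign, $d_{H,\infty}^2$, $d_{C,\infty}i_\infty-i_\infty d_{H,\infty}$, $p_\infty d_{C,\infty}-d_{H,\infty}p_\infty$ and $d_{C,\infty}h_\infty+h_\infty d_{C,\infty}-(i_\infty p_\infty-1)$ equal the left side above sandwiched as $p(\cdot)i$, $h(\cdot)i$, $p(\cdot)h$ and $h(\cdot)h$, respectively. The paper writes out only the last of these, using your simplifications $\delta h+\delta h\Sigma h=\Sigma h$ and $h\delta+h\Sigma h\delta=h\Sigma$.

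Your ``standard trick'' does not let you bypass the identity, because the homotopy identity is itself its $h(\cdot)h$ sandwich. Moreover, the trick only shows that $p_\infty d_{C,\infty}i_\infty$ is a differential compatible with $i_\infty$ and $p_\infty$. Identifying it with $d_H+p\Sigma i$ needs a further computation using $hd_Ch=-h$, which you get by multiplying $ip=1+d_Ch+hd_C$ on the left by $h$.
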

We need the following lemma before giving the proof of this theorem.
\begin{lemma}
    We have identities
    \begin{align*}
        \delta h \Sigma = \Sigma h \delta = \Sigma - \delta,\quad (1-\delta h)^{-1} = 1+\Sigma h,\quad (1-h\delta)^{-1} = 1 + h\Sigma
    \end{align*}
    and
    \begin{align*}
        \Sigma i p \Sigma + \Sigma d + d \Sigma = 0.
    \end{align*}
\end{lemma}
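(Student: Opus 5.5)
The plan is to treat $\Sigma=\sum_{i\ge0}\delta(h\delta)^i$ as a formal geometric series in the operator $h\delta$; nilpotency of $h\delta$ makes every series finite. I would verify the first three identities by direct reindexing and then derive the last from the SDR relations together with $(d+\delta)^2=0$, writing $d=d_C$ throughout.

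For the first identity, I expand $\delta h\Sigma=\sum_{i\ge0}\delta h\delta(h\delta)^i=\sum_{i\ge0}\delta(h\delta)^{i+1}=\Sigma-\delta$. The same reindexing gives $\Sigma h\delta=\sum_i\delta(h\delta)^ih\delta=\sum_i\delta(h\delta)^{i+1}=\Sigma-\delta$. For the inverses, I note that $\Sigma h=\sum_i\delta(h\delta)^ih=\sum_i(\delta h)^{i+1}$ and $h\Sigma=\sum_i(h\delta)^{i+1}$. These are the tails of the geometric series for $(1-\delta h)^{-1}$ and $(1-h\delta)^{-1}$. Nilpotency of $h\delta$ implies nilpotency of $\delta h$, since $(\delta h)^{N+1}=\delta(h\delta)^Nh$. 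One can also check the inverses directly: $(1-h\delta)(1+h\Sigma)=1+h\Sigma-h\delta-h\delta h\Sigma=1+h\Sigma-h\delta-h(\Sigma-\delta)=1$, using the first identity, and similarly on the other side and for $1-\delta h$.

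The substantive identity is the last one. The two inputs are $d\delta+\delta d+\delta^2=0$, which comes from $(d+\delta)^2=0$ and $d^2=0$, and $dh+hd=ip-1$. The key observation is that $\Sigma=\delta(1-h\delta)^{-1}=(1-\delta h)^{-1}\delta$. So I multiply the target on the left by $(1-\delta h)$ and on the right by $(1-h\delta)$, which are invertible, and reduce the claim to
\begin{align*}
\delta ip\delta+(1-\delta h)\delta d(1-h\delta)+(1-\delta h)d\delta(1-h\delta)=0 .
\end{align*}
Expanding the last two terms gives $\delta d+d\delta-\delta h\delta d-\delta hd\delta-\delta d h\delta-d\delta h\delta+\delta h\delta dh\delta+\delta hd\delta h\delta$. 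I then substitute $\delta d+d\delta=-\delta^2$ in the adjacent pairs: the first two terms become $-\delta^2$; $-\delta h(\delta d+d\delta)=\delta h\delta^2$; $-(\delta d+d\delta)h\delta=\delta^2h\delta$; and $\delta h(\delta d+d\delta)h\delta=-\delta h\delta^2h\delta$. The leftover terms are those in which $d$ is adjacent to $h$. After regrouping, these combine as $\delta(dh+hd)\delta$ inside the expression, with $\delta h\delta\cdots$ corrections. Replacing $dh+hd$ by $ip-1$ produces $\delta ip\delta$ with a minus sign, which cancels the first term, while the $-1$ pieces cancel the $\delta^2$ terms.

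The main obstacle is this bookkeeping, so I would organize it differently to avoid errors. I would prove instead $d\Sigma+\Sigma d=-\Sigma(1+hd+dh)\Sigma$, which is equivalent to the claim because $1+hd+dh=ip$. To do so, set $X=d\Sigma+\Sigma d+\Sigma(hd+dh)\Sigma+\Sigma^2$. Using $\Sigma=\delta+\delta h\Sigma=\delta+\Sigma h\delta$, I rewrite $\Sigma^2=\Sigma h\delta\Sigma+\delta\Sigma$ and show that $X$ satisfies $X=\delta h X+X'$, where $X'$ vanishes by $d\delta+\delta d+\delta^2=0$. Then $(1-\delta h)X=0$, and invertibility of $1-\delta h$ gives $X=0$. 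Nilpotency of $\delta h$ is needed only to guarantee that this invertibility holds.
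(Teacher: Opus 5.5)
The first three identities are fine, but the reduction in your third paragraph is miscomputed. When you multiply $\Sigma d$ by $1-\delta h$ on the left and $1-h\delta$ on the right, only the left factor is absorbed: $(1-\delta h)\Sigma=\delta$, but $d$ separates $\Sigma$ from the right factor. So $(1-\delta h)\Sigma d(1-h\delta)=\delta d(1-h\delta)$, and symmetrically $(1-\delta h)d\Sigma(1-h\delta)=(1-\delta h)d\delta$.

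Your displayed identity therefore differs from the conjugated lemma by $-\delta h\delta d(1-h\delta)-(1-\delta h)d\delta h\delta$, and it is false in general. Take $C_n=\langle x,\alpha\rangle$, $C_{n-1}=\langle\beta,y\rangle$, $C_{n-2}=\langle z\rangle$, with $d\alpha=\beta$, $dy=z$, $h\beta=-\alpha$, $hz=-y$, $\delta x=\beta+y$, $\delta\alpha=y$, $\delta\beta=-z$, and all other values zero. Let $H=\langle x\rangle$ with the obvious $i,p$. This is an SDR datum with a perturbation, and your left-hand side sends $x$ to $z$.

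Accordingly, the cancellation you sketch does not occur. After your four substitutions, every $d$ outside $\delta ip\delta$ has been used up. What remains is $\delta(dh+hd)\delta+\delta h\delta^2+\delta^2h\delta-\delta h\delta^2h\delta$, which is not $0$; there are no "leftover terms with $d$ adjacent to $h$."

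With the correct conjugate the argument takes one line. Substituting $ip=1+dh+hd$ into $\delta ip\delta+\delta d(1-h\delta)+(1-\delta h)d\delta$ leaves exactly $\delta^2+\delta d+d\delta=0$. This is essentially the paper's proof. The paper first rewrites $\Sigma d+\Sigma dh\Sigma=\Sigma d(1-h\delta)^{-1}$ and $d\Sigma+\Sigma hd\Sigma=(1-\delta h)^{-1}d\Sigma$, then factors out $(1-\delta h)^{-1}$ on the left and $(1-h\delta)^{-1}$ on the right.

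Your fallback argument is correct, but its only nontrivial step is asserted rather than carried out, so write it down. Using $(1-\delta h)\Sigma=\delta$, you get $(1-\delta h)X=d\Sigma-\delta hd\Sigma+\delta d+\delta(hd+dh)\Sigma+\delta\Sigma=(d+\delta)\Sigma+\delta d(1+h\Sigma)$. Since $\Sigma=\delta+\delta h\Sigma=\delta(1+h\Sigma)$, this equals $(d\delta+\delta d+\delta^2)(1+h\Sigma)=0$. That expression is your $X'$, and invertibility of $1-\delta h$ then gives $X=0$. This is a one-sided variant of the paper's two-sided conjugation and is equally formal. I would drop the first computation and present only this one.
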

\begin{proof}
    The first three identities follow immediately from the formal power expansion of $\Sigma$. Now we have
    \begin{align*}
        \Sigma & i p \Sigma + \Sigma d + d \Sigma  = \Sigma(1+ d h + h d) \Sigma + \Sigma d + d \Sigma \\
        & = \Sigma^2 + \Sigma d(h\Sigma + 1) + (1+\Sigma h)d\Sigma \\
        & = \Sigma^2 + \Sigma d(1-h\delta)^{-1} + (1-\delta h)^{-1}d\Sigma \\
        & = (1-\delta h)^{-1}\left( (1-\delta h) \Sigma^2 (1-h\delta) + (1-\delta h)\Sigma d + d\Sigma (1-h\delta) \right) (1-h\delta)^{-1}\\
        & = (1-\delta h)^{-1}\left( (\Sigma-\delta h \Sigma) (\Sigma- \Sigma h\delta) + (\Sigma-\delta h \Sigma) d + d (\Sigma-\Sigma h\delta) \right) (1-h\delta)^{-1} \\
        & = (1-\delta h)^{-1}\left( \delta^2 + \delta d + d \delta \right) (1-h\delta)^{-1}
    \end{align*}
    Finally note that $\delta^2 + \delta d + d \delta = 0$ since $d+\delta$ is a differential, and the lemma follows.
\end{proof}

\begin{proof}[Proof of Theorem \ref{thm:HPL}]
    The only nontrivial part of the proof is to check $$i_{\infty}p_{\infty} = \id_{C} + d_{C,\infty} h_{\infty} + h_{\infty} d_{C,\infty}.$$ 
    Expanding $\id_{C} + d_{C,\infty} h_{\infty} + h_{\infty} d_{C,\infty} - i_{\infty}p_{\infty}$ we have
    \begin{align*}
        \id_C + &(d_C + \delta)(h + h\Sigma h) + (h + h\Sigma h)(d_C + \delta) - (i + h\Sigma i)(p + p\Sigma h)  = \\
        &= \id_C + d_C h + d_C h \Sigma h + \delta h + \delta h \Sigma h + h d_C + h \delta \\
        &\qquad + h \Sigma h d_C + h \Sigma h \delta - ip - ip\Sigma h - h\Sigma ip - h\Sigma ip \Sigma h 
    \end{align*}
    Using $\Sigma h\delta = \Sigma - \delta$, $\delta h \Sigma = \Sigma - \delta$, $ip = \id_C +d_C h + h d_C$ and $\Sigma ip \Sigma = -(\Sigma d_C + d_C \Sigma)$, we get
    \begin{align*}
        \id_{C} + & d_{C,\infty} h_{\infty} + h_{\infty} d_{C,\infty} - i_{\infty}p_{\infty} \\
        &= h \delta + h\Sigma h d_C +  h(\Sigma-\delta) + \delta h + d_C h \Sigma h + (\Sigma - \delta)h \\
        &\qquad\quad - ip\Sigma h - h\Sigma ip + h(\Sigma d_C + d_C \Sigma)h \\
        &=h\Sigma h d_C + h\Sigma + d_C h \Sigma h + \Sigma h - ip \Sigma h - h\Sigma ip + h\Sigma d_C h + h d_C\Sigma h \\
        &=h\Sigma(hd_C + \id_C - ip + d_C h) + (d_C h + \id_C - ip + h d_C)\Sigma h
    \end{align*}
    Finally again by $ip = \id_C + d_C h + h d_C$ this is equal to zero.
\end{proof}

\section{Construction of minimal resolutions}\label{sec:construction}

In this section we apply the homological perturbation lemma to construct a strong deformation retract of the HHL resolution. We will then show that the resulting complex is indeed a minimal resolution. The combinatorial description of the differentials will be given in the next section.

\smallskip

We begin this section by discussing the choice of the homotopy map \(h\) used in the homological perturbation lemma. In the construction below, we use the Moore--Penrose inverse to construct such a homotopy. In principle, there are many other possible choices of \(h\). As long as \(h\) defines a strong deformation retract of the associated graded complex of the HHL resolution onto its homology complex, the arguments of this section go through. See Remark~\ref{rmk:choice-of-homotopy-2} and the example at the end of Section~\ref{sec:example} for more details.

\smallskip

\begin{definition}
    We endow the Bondal-Thomsen collection $BT$ with the partial order $\leq$ defined by
    \begin{align*}
    \mathcal{O}(\tau) \leq \mathcal{O}(\sigma) \text{ if and only if } \operatorname{Hom}(\mathcal{O}(\sigma), \mathcal{O}(\tau)) \neq 0,
\end{align*}
and call $(BT,\leq)$ the \textit{Bondal-Thomsen poset}.
\end{definition}

The HHL resolution $(C_{\bullet},d)$ admits a natural filtration by the Bondal-Thomsen poset.

\begin{definition}
    Fix $[a]\in BT$. We define $F_{[a]}C_{\bullet}$ to be
    \begin{align*}
        0\rightarrow\bigoplus_{\substack{\dim\sigma=k \\ \mathcal{O}(\sigma)\leq [a] }} \mathcal{O}(\sigma) \rightarrow \cdots \rightarrow \bigoplus_{\substack{\dim\sigma=1 \\ \mathcal{O}(\sigma)\leq [a]}} \mathcal{O}(\sigma) \rightarrow \bigoplus_{\substack{\dim\sigma=0 \\ \mathcal{O}(\sigma)\leq [a]}} \mathcal{O}(\sigma)\rightarrow0
    \end{align*}
    i.e., the subcomplex of the HHL resolution where only $\mathcal{O}(\sigma)$ with $\mathcal{O}(\sigma)\leq [a]$ appear. Then $\{F_{[a]}C_{\bullet}\}_{[a]\in BT}$ defines a filtration on the HHL complex by the Bondal-Thomsen poset. Furthermore, we denote the associated graded complex by 
    \begin{align*}
        \gr^F C_{\bullet} = \bigoplus_{[a]} \gr^F_{[a]}C_{\bullet} = \bigoplus_{[a]}\left(F_{[a]}C_{\bullet}/F_{<[a]}C_{\bullet}\right).
    \end{align*}
\end{definition}
\begin{remark}
    The associated graded complex $\gr^F C_{\bullet}$ is easily seen to be the complex obtained by setting all variables $x_i$ to zero in the HHL resolution. In particular, all entries in the differentials $\gr^F d$ are either $0$ or $\pm1$.
\end{remark}

Recall that for a fixed $[a]\in BT$, we denote by $S_{[a]}$ the set of strata $\sigma$ in the Bondal stratification such that $\mathcal{O}(\sigma)$ is isomorphic to $[a]$. We consider the locally finite cellular chain complex
\begin{align*}
    C^{\mathrm{BM}}_{\bullet}(S_{[a]}):\ 0\rightarrow \bigoplus_{\substack{\sigma\in S_{[a]} \\ \dim\sigma = k}}\C\sigma \xrightarrow{\partial_{[a]}} \cdots \rightarrow \bigoplus_{\substack{\sigma\in S_{[a]} \\ \dim\sigma = 1}}\C\sigma \xrightarrow{\partial_{[a]}} \bigoplus_{\substack{\sigma\in S_{[a]} \\ \dim\sigma = 0}}\C\sigma \rightarrow 0
\end{align*}
that computes the Borel-Moore homology (with $\C$-coefficients) of the stratum $S_{[a]}$. If we endow each $\C$-vector space with the standard Hermitian inner product by declaring the cell generators to be an orthonormal basis, the differentials $\partial_{[a]}$ and their adjoints $\partial_{[a]}^*$ (which are just the transposes since all matrices are over $\Z$) define the following combinatorial Hodge decomposition
\begin{align}\label{eq:comb-hodge-decomposition}
    \bigoplus_{\substack{\sigma\in S_{[a]} \\ \dim\sigma = i}}\C\sigma = \im\,\partial_{[a],i+1}\oplus \mathcal{H}_{[a],i} \oplus \im\,\partial_{[a],i}^*
\end{align}
that induces a canonical isomorphism between the homology $H_i^{\mathrm{BM}}(S_{[a]})$ and the harmonic part $\mathcal{H}_{[a],i}$. Therefore we have an embedding $i: H^{\mathrm{BM}}_{\bullet}(S_{[a]})\hookrightarrow C^{\mathrm{BM}}_{\bullet}(S_{[a]})$ and a projection $p: C^{\mathrm{BM}}_{\bullet}(S_{[a]}) \twoheadrightarrow H^{\mathrm{BM}}_{\bullet}(S_{[a]})$.

\smallskip

Next we use the Moore-Penrose inverses of the differentials of $C^{\mathrm{BM}}_{\bullet}(S_{[a]})$ to construct a homotopy map that induces a strong deformation retract onto its homology complex.

\begin{lemma}
    We have a SDR datum
    \[
\begin{tikzpicture}[>=Stealth,baseline=(current bounding box.center)]
  \node (L) at (0,0) {$ (H^{\mathrm{BM}}_{\bullet}(S_{[a]}), 0)$};
  \node (M) at (4.2,0) {$ (C^{\mathrm{BM}}_{\bullet}(S_{[a]}), \partial_{[a]})$};

  \draw[->] ([yshift=3pt]L.east) -- node[above] {$i$} ([yshift=3pt]M.west);
  \draw[->] ([yshift=-3pt]M.west) -- node[below] {$p$} ([yshift=-3pt]L.east);

  \node (H) at (5.5,0) {};
  \draw[->] (H) edge[loop right, min distance=10mm] node[right] {$-\partial_{[a]}^+$} (H);
\end{tikzpicture}
\]
between $C^{\mathrm{BM}}_{\bullet}(S_{[a]})$ and its homology complex $H^{\mathrm{BM}}_{\bullet}(S_{[a]})$ with trivial differentials.
\end{lemma}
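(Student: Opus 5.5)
We verify the SDR axioms directly from the combinatorial Hodge decomposition \eqref{eq:comb-hodge-decomposition} together with the characterization of the Moore-Penrose inverse in the Remark following its definition. Write $\partial=\partial_{[a]}$, let $C_i$ denote the degree-$i$ part of $C^{\mathrm{BM}}_{\bullet}(S_{[a]})$, and set $h=-\partial^+$. Here $\partial^+$ is the degree-wise Moore-Penrose inverse $\partial_{i}^+:C_{i-1}\to C_i$, so $h$ raises degree by one. Take $i$ to be the inclusion of the harmonic part $\mathcal{H}_{[a],\bullet}\cong H^{\mathrm{BM}}_\bullet(S_{[a]})$ and $p$ the orthogonal projection onto it. Since $\partial$ is real, the three summands $\im\partial_{i+1}$, $\mathcal{H}_{[a],i}$, $\im\partial_i^*$ are mutually orthogonal. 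Moreover $\ker\partial_i=\im\partial_{i+1}\oplus\mathcal{H}_{[a],i}$ and $(\ker\partial_i)^\perp=\im\partial_i^*$.

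\emph{Step 1: $i$ and $p$ are chain maps and $pi=\id$.} Harmonic vectors lie in $\ker\partial$, so $\partial i=0=i\cdot 0$. The map $p$ kills $\im\partial_{i+1}$, so $p\partial=0$. The identity $pi=\id$ is immediate.

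\emph{Step 2: the side conditions.} By the Remark, $\partial_{i+1}^+:C_i\to C_{i+1}$ vanishes on $(\im\partial_{i+1})^\perp=\mathcal{H}_{[a],i}\oplus\im\partial_i^*$. Hence $hi=0$. The image of $\partial_{i+1}^+$ is $(\ker\partial_{i+1})^\perp=\im\partial_{i+1}^*$, which is orthogonal to $\mathcal{H}_{[a],i+1}$; hence $ph=0$. For $h^2=0$: the image of $\partial_{i+1}^+$ lies in $\im\partial_{i+1}^*\subset C_{i+1}$, and this is killed by $\partial_{i+2}^+$ because it is orthogonal to $\im\partial_{i+2}$.

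\emph{Step 3: the homotopy identity $ip=\id+\partial h+h\partial$,} equivalently $\partial\partial^++\partial^+\partial=\id-ip$ on each $C_i$. We check this on each Hodge summand, using that $\partial\partial^+$ is the orthogonal projection onto $\im\partial$ and $\partial^+\partial$ is the orthogonal projection onto $(\ker\partial)^\perp$ (both follow from the four Penrose conditions).
\begin{itemize}
\item On $\im\partial_{i+1}$: $\partial_{i+1}\partial_{i+1}^+$ acts as the identity, while $\partial_i^+\partial_i$ is zero because these vectors are cycles. The right side is also the identity, since $p$ kills this summand.
\item On $\mathcal{H}_{[a],i}$: both terms vanish, and $\id-ip=0$ there.
\item On $\im\partial_i^*$: $\partial_i^+\partial_i$ is the identity, since it is the projection onto $(\ker\partial_i)^\perp$, while $\partial_{i+1}\partial_{i+1}^+$ vanishes because the summand is orthogonal to $\im\partial_{i+1}$. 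Again the right side is the identity.
\end{itemize}
The sign is consistent with the convention $ip=\id+dh+hd$ precisely because $h=-\partial^+$.

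The only delicate point is bookkeeping. We must make sure the degree-wise Moore-Penrose inverses, with the indexing $\partial_{i+1}^+:C_i\to C_{i+1}$, compose in the right degrees. We must also use that the Moore-Penrose inverse of the real matrix $\partial$ is orthogonal with respect to the same inner product that defines the Hodge decomposition. Everything else reduces to the statement that $\partial\partial^+$ and $\partial^+\partial$ are the orthogonal projections onto $\im\partial$ and $(\ker\partial)^\perp$. Local finiteness causes no issue, since each $C_i$ is finite-dimensional because $S$ is a finite stratification of the compact torus $T$.
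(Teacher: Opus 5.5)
Your proof is correct and follows essentially the same route as the paper: both verify the SDR axioms by reading off how $\partial_{[a]}^+$ acts on the combinatorial Hodge decomposition, and both use that $\partial\partial^+$ and $\partial^+\partial$ are the orthogonal projections onto $\im\partial$ and $\im\partial^*$, which gives $ip=\id-\partial\partial^+-\partial^+\partial$. Your version is somewhat more explicit, since you also check that $i$ and $p$ are chain maps and verify the homotopy identity summand by summand with cleaner degree indexing, but the underlying argument is the same.
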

\begin{proof}
Now the Moore-Penrose inverse $\partial_{[a],i-1}^+$ kills $\im\,\partial_{[a],i-1}^*$ and $\mathcal{H}_{[a],i-1}$, and restricts to an isomorphism between $\im\,\partial_{[a],i}$ and $\im\,\partial_{[a],i}^*$. By definition of the Moore-Penrose inverse, the isomorphisms
\begin{align*}
    \partial_{[a],i}: \im\,\partial_{[a],i}^*\xrightarrow{\sim} \im\,\partial_{[a],i-1},\qquad \partial_{[a],i-1}^+: \im\,\partial_{[a],i-1}\xrightarrow{\sim} \im\,\partial_{[a],i}^*
\end{align*}
are inverse to each other.

\smallskip

By the definitions of $i$ and $p$ it is clear that $pi=\id_H$. It is also clear that $\partial_{[a],i}\partial_{[a],i}^{+}$ is the projection onto $\operatorname{im}\partial_{[a],i}$, while $\partial_{[a],i}^{+}\partial_{[a],i}$ is the projection onto $\operatorname{im}\partial^*_{[a],i}$. Therefore we have $ip = \id_C - \partial_{[a],i}\partial_{[a],i-1}^+ - \partial_{[a],i-1}^+\partial_{[a],i}$. Finally $(\partial_{[a]}^+)^2=0$, $\partial_{[a]}^+ i =0$ and $p\partial_{[a]}^+ = 0$ because of the action of $\partial_{[a]}^+$ on the decomposition \eqref{eq:comb-hodge-decomposition}. Therefore we get a SDR datum as claimed.
\end{proof}

\smallskip

It is then straightforward to lift these results to the associated graded complex of the HHL resolution.

\smallskip

\begin{lemma}
    For each $[a]\in BT$, the graded piece $\gr^F_{[a]}C_{\bullet} = F_{[a]}C_{\bullet}/F_{<[a]}C_{\bullet}$ is isomorphic to $C^{\mathrm{BM}}(S_{[a]})\otimes_{\C}\mathcal{O}(a)$. In particular, the homology $H_i(\gr^F_{[a]}C_{\bullet})$ is isomorphic to $H^{\mathrm{BM}}_{i}(S_{[a]})\otimes\mathcal{O}(a)$.
\end{lemma}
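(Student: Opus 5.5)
The plan is to identify the quotient directly from the definition of the filtration. As a graded sheaf, $\gr^F_{[a]}C_i$ has summands $\mathcal{O}(\sigma)$ indexed by the $i$-dimensional cells $\sigma$ with $\mathcal{O}(\sigma)\leq[a]$ but not $\mathcal{O}(\sigma)<[a]$. These are exactly the cells with $\mathcal{O}(\sigma)\cong\mathcal{O}(a)$, i.e. $\sigma\in S_{[a]}$. Fixing once and for all an isomorphism $\mathcal{O}(\sigma)\cong\mathcal{O}(a)$ for each $\sigma\in S_{[a]}$, we obtain termwise isomorphisms
\[
\gr^F_{[a]}C_i\;\cong\;\bigoplus_{\sigma\in S_{[a]},\,\dim\sigma=i}\C\sigma\otimes_{\C}\mathcal{O}(a).
\]
The natural choice is the one given by the monomial $\prod x_j^{c_j}$ determined by $\lceil H_j(\widetilde\sigma)\rceil$ for a fixed lift, compared with a fixed representative $a$.

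Next we compare differentials. For $\sigma,\tau\in S_{[a]}$ with $\tau$ a facet of $\sigma$, each lift $\widetilde\tau$ of $\tau$ in $\partial\widetilde\sigma$ contributes the entry $\mathrm{sgn}(\widetilde\sigma,\widetilde\tau)\prod x_j^{\epsilon_j}$. Since $\mathcal{O}(\sigma)\cong\mathcal{O}(\tau)$ and $\operatorname{Hom}(\mathcal{O}(a),\mathcal{O}(a))=\C$ (the Cox ring is positively graded, $\mathcal{X}$ being built from a subfan of the orthant fan), the monomial must be a constant. Concretely, $\sum\epsilon_j D_j$ is linearly trivial and effective, so all $\epsilon_j=0$, which forces the two lifts to be compatible and the entry to be $\pm1$. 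This agrees with the Remark that $\gr^F d$ is obtained by setting the $x_j=0$.

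It then remains to check that the surviving entries, namely signs summed over lifts $\widetilde\tau\subset\widetilde\sigma$ with $\epsilon=0$, are exactly the incidence numbers of the cellular chain complex of the locally closed subspace $S_{[a]}\subset T$ computing Borel–Moore homology. Face relations in $S_{[a]}$ are those facets of $\sigma$ lying in $S_{[a]}$. A lift $\widetilde\tau$ with $\epsilon\neq0$ gives a monomial of positive degree and dies in $\gr$. The claim is that the lifts of $\tau\in S_{[a]}$ with $\epsilon=0$ are precisely the ones realizing $\tau$ as a face within the closure taken inside $S_{[a]}$, so the incidences match. This step needs the most care. I would argue it via the ceiling description: $\epsilon_j=\lceil H_j(\widetilde\sigma)\rceil-\lceil H_j(\widetilde\tau)\rceil\ge0$ always, and the equalities $\epsilon_j=0$ for all $j$ are exactly the lifts on the same local chart of $S_{[a]}$. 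Granting this, $\gr^F_{[a]}C_\bullet\cong C^{\mathrm{BM}}_\bullet(S_{[a]})\otimes_\C\mathcal{O}(a)$ as complexes, with the signs matching our fixed orientations.

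The homology statement follows because $-\otimes_\C\mathcal{O}(a)$ is exact over the field $\C$, so
\[
H_i(\gr^F_{[a]}C_\bullet)\cong H_i(C^{\mathrm{BM}}_\bullet(S_{[a]}))\otimes\mathcal{O}(a)=H^{\mathrm{BM}}_i(S_{[a]})\otimes\mathcal{O}(a).
\]
The last equality uses the fact, stated before the Lemma, that $C^{\mathrm{BM}}_\bullet(S_{[a]})$ computes Borel–Moore homology. The main obstacle is therefore the sign and incidence matching in the third paragraph: making sure the multiple lifts of $\tau$ contribute to $\gr$ exactly as the locally finite cellular incidence numbers of $S_{[a]}$ do.
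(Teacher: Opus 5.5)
Your outline is the paper's own argument: its proof just asserts that every summand of $\gr^F_{[a]}C_\bullet$ is $\mathcal{O}(a)$ and that the differentials agree with those of $C^{\mathrm{BM}}(S_{[a]})$. However, the step you leave as ``granted'' is misframed, and your second paragraph has in fact already proved it.

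The class of $\mathcal{O}(\tau)$ does not depend on the lift. So for $\sigma,\tau\in S_{[a]}$, \emph{every} lift $\widetilde\tau\subset\partial\widetilde\sigma$ of $\tau$ has $\sum_j\epsilon_jD_j$ effective and linearly trivial, hence $\epsilon=0$. Consequently:
\begin{itemize}
\item there are no lifts of $\tau\in S_{[a]}$ with $\epsilon\neq0$ to discard, and no ``local chart'' condition enters;
\item the $(\sigma,\tau)$-entry of $\gr^F_{[a]}d$ is the full cellular incidence number $[\sigma:\tau]$ of $T$;
\item cells outside $S_{[a]}$ simply drop out.
\end{itemize}

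What actually remains is that this restricted complex computes $H^{\mathrm{BM}}_\bullet(S_{[a]})$, i.e.\ that $S_{[a]}$ is locally closed. You only assert this. It follows from your ceiling estimate applied to faces of all codimensions: $\lceil H_j\rceil$ can only drop on passing to the closure, so $\mathcal{O}(\rho)\le\mathcal{O}(\sigma)$ whenever $\rho\subseteq\overline{\sigma}$. Hence the unions $T_{\le[a]}$ and $T_{<[a]}$ of cells of class $\le[a]$, resp.\ $<[a]$, are closed subcomplexes of $T$; this is the topological counterpart of $F_{[a]}C_\bullet$ and $F_{<[a]}C_\bullet$ being subcomplexes. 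Then $S_{[a]}=T_{\le[a]}\setminus T_{<[a]}$ and $\gr^F_{[a]}C_\bullet\cong C_\bullet(T_{\le[a]},T_{<[a]})\otimes_{\C}\mathcal{O}(a)$. Since $T$ is compact, its homology is $H^{\mathrm{BM}}_\bullet(S_{[a]})\otimes\mathcal{O}(a)$.

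Second, your justification of $\operatorname{Hom}(\mathcal{O}(a),\mathcal{O}(a))=\C$ is wrong: being built from a subfan of the orthant fan does not make the Cox ring positively graded. Take $\mathcal{X}=\C$ and $\mathcal{Y}=\{1\}$ (one ray, $\Lambda=\Z$, $G$ trivial). Then $BT=\{\mathcal{O}\}$ and $\gr^F C_\bullet=C_\bullet$, with differential $\pm(1-x)$. But $C^{\mathrm{BM}}(S)$ is the one-vertex, one-edge chain complex of the circle, with zero differential, so the lemma fails there.

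The lemma needs $H^0(\mathcal{X},\mathcal{O}_{\mathcal{X}})=\C$ (e.g.\ $\mathcal{X}$ proper) as a hypothesis. The paper uses this tacitly too, in two places:
\begin{itemize}
\item the Remark that $\gr^F d$ is obtained by setting all $x_i=0$;
\item the antisymmetry of $\le$ on $BT$, on which the filtration and the closedness of $T_{<[a]}$ rely.
\end{itemize}
With that hypothesis stated rather than derived, your argument is complete.
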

\begin{proof}
    This is straightforward because all line bundles in the graded piece $\gr^F_{[a]}C_{\bullet}$ are isomorphic to $[a]$, and the differentials in $\gr^F_{[a]}C_{\bullet}$ are exactly the same as the differentials in $C^{\mathrm{BM}}(S_{[a]})$.
\end{proof}

\begin{corollary}
    We have the following SDR datum 
    \[
\begin{tikzpicture}[>=Stealth,baseline=(current bounding box.center)]
  \node (L) at (0,0) {$ (H_{\bullet}, 0)$};
  \node (M) at (3.2,0) {$ (\gr^F C_{\bullet}, \gr^F d)$};

  \draw[->] ([yshift=3pt]L.east) -- node[above] {$i$} ([yshift=3pt]M.west);
  \draw[->] ([yshift=-3pt]M.west) -- node[below] {$p$} ([yshift=-3pt]L.east);

  \node (H) at (4.5,0) {};
  \draw[->] (H) edge[loop right, min distance=10mm] node[right] {$h$} (H);
\end{tikzpicture}
\]
where $(H_{\bullet},0)$ is the homology complex of the associated graded complex of the HHL resolution with trivial differentials
\begin{align*}
    0\rightarrow\bigoplus_{\substack{[a]\in BT }} \mathcal{H}_{[a],k}\otimes \mathcal{O}_{\mathcal{X}}(a) \xrightarrow{0} \cdots  \xrightarrow{0} \bigoplus_{\substack{[a]\in BT }} \mathcal{H}_{[a],0}\otimes\mathcal{O}_{\mathcal{X}}(a)\rightarrow0
\end{align*}
and the homotopy $h$ is defined as $\bigoplus_{[a]\in BT} -\partial_{[a]}^+(-)\otimes \mathcal{O}(a)$.
\end{corollary}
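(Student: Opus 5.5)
The plan is to assemble the SDR datum summand by summand from the previous two lemmas, and then check that the SDR axioms survive tensoring with line bundles and taking direct sums.

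\emph{Step 1: decompose.} By definition, $\gr^F C_\bullet=\bigoplus_{[a]\in BT}\gr^F_{[a]}C_\bullet$ as complexes of sheaves. The preceding lemma identifies each summand with $C^{\mathrm{BM}}_\bullet(S_{[a]})\otimes_\C\mathcal{O}(a)$. Under this identification, $\gr^F d$ restricted to the $[a]$-summand becomes $\partial_{[a]}\otimes\id_{\mathcal{O}(a)}$.

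Two points need care here.
\begin{itemize}
\item Since $\gr^F d$ is the HHL differential with all $x_i$ set to zero, it has no components between different graded pieces. So the direct sum really is a direct sum of complexes.
\item Each entry of $\gr^F d$ is $\pm1$, given by the sign $\mathrm{sgn}(\widetilde\sigma,\widetilde\tau)$ between cells in the same $S_{[a]}$. These are exactly the incidence numbers of the locally finite cellular chain complex. Here one should check that summing over the different lifts $\widetilde\tau$ agrees with how the Borel--Moore complex counts multiple facet incidences on the torus, but this agreement is precisely the content of the lemma.
\end{itemize}

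\emph{Step 2: transport the SDR data.} For each $[a]$, take the SDR datum $(i_{[a]},p_{[a]},-\partial^+_{[a]})$ from the lemma on $C^{\mathrm{BM}}_\bullet(S_{[a]})$, and tensor it over $\C$ with $\mathcal{O}(a)$. Tensoring with a fixed sheaf is an additive functor, and every SDR axiom is an identity among composites and sums of maps:
\[
pi=\id,\qquad ip=\id+dh+hd,\qquad h^2=0,\qquad hi=0,\qquad ph=0.
\]
Hence the tensored maps $i_{[a]}\otimes\id$, $p_{[a]}\otimes\id$, $-\partial^+_{[a]}\otimes\id$ again satisfy all the axioms. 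The target of $i_{[a]}$ is $H^{\mathrm{BM}}_\bullet(S_{[a]})\cong\mathcal{H}_{[a],\bullet}$ with zero differential, so after tensoring we get exactly $\mathcal{H}_{[a],\bullet}\otimes\mathcal{O}(a)$. Chain maps stay chain maps, because the differential on $H$ is zero and $\partial_{[a]}i=0=p\,\partial_{[a]}$ by the Hodge decomposition \eqref{eq:comb-hodge-decomposition}.

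\emph{Step 3: sum.} Define $i=\bigoplus i_{[a]}\otimes\id$, $p=\bigoplus p_{[a]}\otimes\id$, and $h=\bigoplus -\partial^+_{[a]}\otimes\id$. Each of these maps is block diagonal with respect to the decomposition of Step 1, and so is $\gr^F d$. Therefore every SDR identity holds blockwise, and hence holds globally. This gives the stated SDR datum between $(H_\bullet,0)$ and $(\gr^F C_\bullet,\gr^F d)$.

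The only real obstacle is in Step 1: confirming that no cross-terms between distinct $[a]$ survive in $\gr^F d$, and that the signs in $\gr^F d$ match the cellular incidence numbers. The remaining steps are purely formal.
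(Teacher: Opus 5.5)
Your proposal is correct and matches the paper's proof: it uses the identification $\gr^F_{[a]}C_\bullet\cong C^{\mathrm{BM}}_\bullet(S_{[a]})\otimes_{\C}\mathcal{O}(a)$, tensors the Borel--Moore SDR datum with $\mathcal{O}(a)$, and sums over $[a]\in BT$, and your Steps 2 and 3 just carry out that formal check explicitly. The absence of cross-terms that you flag in Step 1 needs no separate argument, since the associated graded complex is by definition the direct sum of the quotient complexes $F_{[a]}C_\bullet/F_{<[a]}C_\bullet$.
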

\begin{proof}
    It follows immediately from the previous two lemmas by tensoring with $\mathcal{O}(a)$ and taking direct sum over all $[a]\in BT$.
\end{proof}

Now we define $\delta = d - \gr^F d$ as the difference between the original differentials on the HHL resolution $(C_{\bullet},d)$ and the differentials on the associated graded complex $(\gr^F C_{\bullet}, \gr^F d)$. The following lemma shows that $\delta$ defines a perturbation on the latter.

\begin{lemma}
    The composition $h\delta$ is nilpotent.
\end{lemma}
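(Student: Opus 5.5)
The plan is to show that $h\delta$ strictly increases a filtration index drawn from the finite poset $BT$, so that a long enough composite must vanish. We decompose $C_\bullet=\bigoplus_{[a]\in BT}\gr^F_{[a]}C_\bullet$ as graded sheaves (not as complexes), using the splitting by line bundle class already implicit in the definition of $\gr^F$. With respect to this decomposition we locate each map.

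\emph{The homotopy $h$.} By the preceding Corollary, $h=\bigoplus_{[a]}-\partial^+_{[a]}\otimes\mathcal{O}(a)$ is block diagonal. It maps the $[a]$-summand to itself.

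\emph{The perturbation $\delta$.} Since $\delta=d-\gr^F d$, its component $\mathcal{O}(\sigma)\to\mathcal{O}(\tau)$ is the HHL entry when $\mathcal{O}(\sigma)\neq\mathcal{O}(\tau)$, and zero when $\mathcal{O}(\sigma)=\mathcal{O}(\tau)$. In the latter case the HHL entry is $\pm 1$ times a monomial, which must be a unit since the two bundles coincide; so it is a constant, and hence is recorded entirely in $\gr^F d$. A nonzero HHL entry $\mathcal{O}(\sigma)\to\mathcal{O}(\tau)$ requires $\operatorname{Hom}(\mathcal{O}(\sigma),\mathcal{O}(\tau))\neq0$, that is, $\mathcal{O}(\tau)\le\mathcal{O}(\sigma)$. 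This holds because $d$ preserves the filtration $F$: the exponents $\epsilon_i=\lceil H_i(\widetilde\sigma)\rceil-\lceil H_i(\widetilde\tau)\rceil$ are nonnegative, since $\tau$ lies in the closure of $\sigma$. Consequently $\delta$ maps the $[a]$-summand into $\bigoplus_{[b]<[a]}$ summands, strictly decreasing the class.

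\emph{Nilpotency.} Combining the two observations, $h\delta$ sends the $[a]$-summand into $\bigoplus_{[b]<[a]}$ summands. Hence $(h\delta)^N$ sends the $[a]$-summand into summands indexed by the bottom elements of chains $[a]>[b_1]>\dots>[b_N]$ in $BT$. We need two facts at this point:
\begin{itemize}
\item $BT$ is finite, because $T$ is compact and the Bondal stratification has finitely many cells;
\item $\le$ is a genuine partial order, with no cycles, which was already assumed when the order was introduced.
\end{itemize}
Together these imply that strictly decreasing chains have length at most $|BT|$. Taking $N=|BT|$ (or the number of cells) gives $(h\delta)^N=0$. One could also note that $h\delta$ lowers homological degree by $0$: $\delta$ lowers it by $1$ and $h$ raises it by $1$. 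That alone does not give nilpotency, which is why the poset argument is needed.

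\emph{Main obstacle.} The step requiring care is the claim that $\delta$ has no component between equal classes, that is, that $\gr^F d$ captures exactly the equal-class components of $d$. If $\mathcal{O}(\sigma)\cong\mathcal{O}(\tau)$, the monomial $\prod x_i^{\epsilon_i}$ is a nonzero section of $\mathcal{O}(\tau-\sigma)\cong\mathcal{O}$. On $[U_\Sigma/G]$, the global invariant functions should only be constants; in the stacky setting this follows from the same argument underlying the Remark that $\gr^F d$ is obtained by setting all $x_i=0$. So a nonconstant monomial cannot occur, and the equal-class component of $d$ coincides with $\gr^F d$. I would cite that Remark to settle this point, and the nilpotency then follows formally.
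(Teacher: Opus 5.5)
Your proposal is correct and follows the paper's own argument: $h$ preserves each class $[a]$, $\delta$ sends the class-$[a]$ summands into classes strictly below $[a]$, and finiteness of $BT$ then forces $(h\delta)^{|BT|}=0$. The step you flag as the main obstacle is automatic and can be dropped. Since $\gr^F_{[a]}C_\bullet=F_{[a]}C_\bullet/F_{<[a]}C_\bullet$, the map $\gr^F d$ is by definition the equal-class block of $d$, so $\delta=d-\gr^F d$ has no equal-class components whether or not those entries are constants. Constancy matters elsewhere, e.g.\ to identify $\gr^F_{[a]}C_\bullet$ with $C^{\mathrm{BM}}_\bullet(S_{[a]})\otimes\mathcal{O}(a)$.
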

\begin{proof}
    By definition of $\delta$ the image of $\mathcal{O}(\sigma)\in BT$ is contained in the direct sum of $\mathcal{O}(\tau)$ with $\mathcal{O}(\tau) < \mathcal{O}(\sigma)$ in the Bondal-Thomsen poset. Note that by definition of $h$ the image of $\mathcal{O}(\tau)$ is contained in the direct sum of $\mathcal{O}(\tau')$ with $\mathcal{O}(\tau')= \mathcal{O}(\tau)$. Therefore $h\delta$ strictly lowers the position in the Bondal-Thomsen poset, which implies that $h\delta$ is nilpotent since $BT$ is finite.
\end{proof}

\smallskip

Therefore by the homological perturbation lemma we obtain a new SDR datum
\[
\begin{tikzpicture}[>=Stealth,baseline=(current bounding box.center)]
  \node (L) at (0,0) {$ (H_{\bullet}, d_{\min})$};
  \node (M) at (3.2,0) {$ (C_{\bullet}, d)$};

  \draw[->] ([yshift=3pt]L.east) -- node[above] {$i_{\infty}$} ([yshift=3pt]M.west);
  \draw[->] ([yshift=-3pt]M.west) -- node[below] {$p_{\infty}$} ([yshift=-3pt]L.east);

  \node (H) at (4.0,0) {};
  \draw[->] (H) edge[loop right, min distance=10mm] node[right] {$h_{\infty}$} (H);
\end{tikzpicture}
\]
where $\Sigma = \sum_{i=0}^{\infty} \delta(h\delta)^i$ and $d_{\min}=p\Sigma i$.

\smallskip

\begin{theorem}\label{thm:main}
    The complex $(H_{\bullet}, d_{\min})$:
    \begin{align*}
    0\rightarrow\bigoplus_{\substack{[a]\in BT }} \mathcal{H}_{[a],k}\otimes \mathcal{O}_{\mathcal{X}}(a) \xrightarrow{d_{\min}} \cdots  \xrightarrow{d_{\min}} \bigoplus_{\substack{[a]\in BT }} \mathcal{H}_{[a],0}\otimes\mathcal{O}_{\mathcal{X}}(a)\rightarrow0
\end{align*}
is a minimal resolution of $i_*\mathcal{O}_{\mathcal{Y}}$ by line bundles in the Bondal-Thomsen collection.
\end{theorem}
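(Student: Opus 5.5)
The proof splits into two claims: $(H_\bullet,d_{\min})$ is a resolution of $i_*\mathcal{O}_{\mathcal{Y}}$, and it is minimal. Both rest on the strong deformation retract datum just obtained from the homological perturbation lemma (Theorem \ref{thm:HPL}). We will use the nilpotence of $h\delta$, which makes $\Sigma$ a finite sum, so every map involved is an honest morphism of sheaves.

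\emph{Resolution.} Theorem \ref{thm:HPL} shows that $i_\infty\colon (H_\bullet,d_{\min})\to (C_\bullet,d)$ is a homotopy equivalence of complexes of sheaves; note that $d_H=0$, so $d_{H,\infty}=p\Sigma i=d_{\min}$. By Theorem \ref{thm:HHL}, $(C_\bullet,d)$ is a locally free resolution of $i_*\mathcal{O}_{\mathcal{Y}}$. Hence $H_\bullet$ has the same homology: it is concentrated in degree $0$, where it equals $i_*\mathcal{O}_{\mathcal{Y}}$. Each term $\bigoplus_{[a]}\mathcal{H}_{[a],j}\otimes\mathcal{O}(a)$ is a finite sum of line bundles from $BT$, since $\mathcal{H}_{[a],j}$ is finite dimensional. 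The augmentation is the composite $H_0\xrightarrow{i_\infty}C_0\to i_*\mathcal{O}_{\mathcal{Y}}$.

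\emph{Minimality.} We interpret minimality as in the Cox-ring formulation: no component of $d_{\min}$ between equal line bundles $\mathcal{O}(a)\to\mathcal{O}(a)$ is a nonzero constant. Equivalently, $d_{\min}$ becomes zero after all variables $x_i$ are set to $0$. Write $d_{\min}=p\Sigma i=\sum_{m\ge0}p\,\delta(h\delta)^m i$. By definition $\delta=d-\gr^F d$ maps $\mathcal{O}(\sigma)$ into $\bigoplus\mathcal{O}(\tau)$ with $\mathcal{O}(\tau)<\mathcal{O}(\sigma)$ strictly. Each entry of $\delta$ is therefore $\pm$ a monomial $\prod x_i^{\epsilon_i}$ with some $\epsilon_i>0$, because different line bundles in $BT$ admit no nonzero constant maps. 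The maps $h$, $i$, $p$ all preserve the $[a]$-grading and are $\C$-linear (tensored with $\mathcal{O}(a)$). Each term of $\Sigma$ therefore strictly decreases the $BT$-position, and every matrix entry of $d_{\min}$ is a $\C$-linear combination of nonconstant monomials. In particular the component $\mathcal{H}_{[a],j}\otimes\mathcal{O}(a)\to\mathcal{H}_{[a],j-1}\otimes\mathcal{O}(a)$ vanishes, so $d_{\min}\otimes_{\mathcal{O}}\C$ (reduction modulo the irrelevant variables) is zero. This is the minimality condition.

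\emph{Main obstacle.} The main obstacle is making ``minimal'' precise in the stacky or virtual setting, where there is no local ring to reduce modulo. We would phrase it as above: no constant components between isomorphic summands. Equivalently, the complex cannot be simplified by Gaussian elimination of an isomorphism $\mathcal{O}(a)\to\mathcal{O}(a)$. We would also note that the ranks $\beta_{j,a}=\dim H^{\mathrm{BM}}_j(S_{[a]})$ are forced. Any resolution by $BT$ line bundles that has no such constant components and is filtered by $BT$ in the same way has associated graded complex with zero differential. Comparing associated gradeds via the spectral sequence of the filtration $F$ shows the multiplicities are invariants. This gives the optimality claim and recovers the Favero--Sapronov Betti numbers.
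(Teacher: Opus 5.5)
Your argument is correct and is essentially the paper's proof: the resolution property comes from the homotopy equivalence supplied by the perturbation lemma combined with Theorem~\ref{thm:HHL}, and minimality comes from the fact that $\delta$ strictly lowers the $BT$-position while $h$, $i$, $p$ preserve it, so $d_{\min}=p\Sigma i$ has no nonzero components between isomorphic summands. Your closing claim that the $\beta_{j,a}$ are invariants of every minimal resolution is not needed for this statement, and your sketch of it is incomplete, because comparing $E_1$ pages presupposes a filtered quasi-isomorphism between the two resolutions, and such a map is not automatic.
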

\begin{proof}
    By the homological perturbation lemma, $(H_{\bullet}, d_{\min})$ has the same homology as the HHL resolution $(C_{\bullet},d)$. Hence, by Theorem \ref{thm:HPL}, $(H_{\bullet}, d_{\min})$ is a resolution of the pushforward of the structure sheaf of the toric substack. Now note that by definition $\Sigma$ sends each $\mathcal{O}(\sigma)$ to some $\mathcal{O}(\tau)$ with $\mathcal{O}(\tau)<\mathcal{O}(\sigma)$, therefore none of its matrix entries is invertible. Therefore $H_{\bullet}$ is minimal.
\end{proof}

\smallskip

\begin{remark}\label{rmk-projection-onto-harmonic}
    If we think of each term of $H_{\bullet}$ as the harmonic subspace of the corresponding term in the HHL resolution $C_{\bullet}$, then we can simply ignore the embedding $i$ in the differential $d_{\min}=p\Sigma i$, and the projection $p$ is defined by $\id - dd^+ - d^+ d$. Therefore the differential is given by 
    \begin{align*}
        d_{\min} = (\id + dh + h d)\Sigma
    \end{align*}
    It is worth mentioning, however, that the embedding $i_{\infty}$ and projection $p_{\infty}$ between the minimal resolution and the HHL resolution as morphisms between complexes are different from the usual embedding and projection $i$ and $p$ defined by the Hodge decomposition. In principle it is possible to express these morphisms also in combinatorial terms, but we will not focus on this issue because we are only interested in the minimal resolution $H_{\bullet}$ as an abstract complex rather than a certain subcomplex of the HHL resolution. To emphasize this point, from now on we will denote $H_{\bullet}$ by 
    \begin{align*}
    0\rightarrow\bigoplus_{\substack{[a]\in BT }} \mathcal{O}_{\mathcal{X}}(a)^{\oplus\beta_{k,a}} \xrightarrow{d_{\min}} \cdots  \xrightarrow{d_{\min}} \bigoplus_{\substack{[a]\in BT }} \mathcal{O}_{\mathcal{X}}(a)^{\oplus\beta_{0,a}}\rightarrow0
\end{align*}
where $\beta_{i,a} = \dim H^{\mathrm{BM}}_{i}(S_{[a]},\C)$.
\end{remark}

\section{Combinatorial description of the differentials}\label{sec:differentials}

In this section we will give a combinatorial description of the differentials $d_{\min}$ of the minimal resolution. To achieve this goal, we first describe the morphism $\Sigma$ on the HHL resolution in terms of certain weighted sums over paths in the Bondal stratification.

\smallskip

\begin{definition}
    Let $(\sigma,\tau)$ be a pair of cells in the Bondal stratification $S$ with $\dim\sigma = \dim\tau + 1$ and $\mathcal{O}(\sigma)\geq \mathcal{O}(\tau)$ in the Bondal-Thomsen poset $BT$. We say that $(\sigma,\tau)$ is \textit{of type I} if $\mathcal{O}(\sigma)> \mathcal{O}(\tau)$, and \textit{of type II} if $\mathcal{O}(\sigma)= \mathcal{O}(\tau)$.
\end{definition}

\smallskip

In general, the morphism $\mathcal{O}(\sigma)\xrightarrow{\Sigma}\mathcal{O}(\tau)$ will be defined in terms of zig-zag paths that consist of alternating type I and type II pairs.

\smallskip

\begin{definition}\label{def:path}
    Let $(\sigma,\tau)$ be a pair of cells. We define a \textit{path} $\lambda$ from $\sigma$ to $\tau$ to be a sequence of cells
    \[
\begin{tikzpicture}[>=stealth]
\matrix (m) [matrix of math nodes,
  row sep=2.8em,
  column sep=2.8em,
  ampersand replacement=\&]{
  \sigma=\sigma_0 \& \& \sigma_1 \& \& \cdots \& \sigma_m \& \\
  \& \tau_0 \& \& \tau_1 \& \& \& \tau_m=\tau \\
};

\draw[->, blue] (m-1-1) -- (m-2-2);
\draw[->,red] (m-2-2) -- (m-1-3);
\draw[->, blue] (m-1-3) -- (m-2-4);
\draw[->,red] (m-2-4) -- (m-1-5);
\draw[->, blue] (m-1-6) --  (m-2-7);
\end{tikzpicture}
\]
    such that:
    \begin{itemize}
        \item each $(\sigma_i,\tau_i)$ is of type I (blue),
        \item each $(\sigma_{i+1},\tau_i)$ is of type II (red).
    \end{itemize}
    We define the length $l(\lambda)$ of the path $\lambda$ to be the number of type II pairs in $\lambda$. We denote the set of all paths from $\sigma$ to $\tau$ by $\mathcal{P}(\sigma,\tau)$.
\end{definition}

\begin{definition}[HHL weights]
    Let $(\sigma,\tau)$ be a type I pair. The weight of $(\sigma,\tau)$ is defined as the $(\sigma,\tau)$-entry of the differential in the HHL resolution, i.e., the component corresponding to $\mathcal{O}(\sigma)\to \mathcal{O}(\tau)$, which we denote by $\mathrm{HHL}_{\sigma,\tau}$ and we call this the \textit{HHL weight} associated to the pair.
\end{definition}

\smallskip

Now we define weights for type II pairs. First recall that for each complex $\left(C^{\mathrm{BM}}_{\bullet}(S_{[a]}),\partial_{[a]}\right)$, we have the Moore-Penrose inverse $\partial_{[a]}^+$. For a type II pair $(\sigma,\tau)$, we define its weight to be the coefficient of $\sigma$ in $\partial_{[a]}^+(\tau)$. The combinatorial nature of Moore-Penrose inverses described in Section~\ref{subsec:MP-inverses} allows us to describe these weights in purely combinatorial terms. In this setting, the hedges are given by the following data:

\smallskip

\begin{definition}
    For each $[a]\in BT$, a \textit{hedge} $ST_{[a]}$ is a collection $\{T_i, S_{i-1}\}_{i=1}^k$, where $T_i \subseteq S_{[a]}^i$ and $S_{i-1} \subseteq S_{[a]}^{i-1}$ such that the compositions
$$ \bigoplus_{\sigma\in T_i}\C\sigma \hookrightarrow \bigoplus_{\substack{\sigma\in S_{[a]} \\ \dim\sigma = i}} \C\sigma \twoheadrightarrow \im\,\partial_{[a],i}$$
and
$$ \im\,\partial_{[a],i}\hookrightarrow \bigoplus_{\substack{\tau\in S_{[a]} \\ \dim\tau = i-1}} \C\tau \twoheadrightarrow \bigoplus_{\tau\in S_{i-1} }\C\tau  $$
are isomorphisms.
\end{definition}

Applying Proposition \ref{prop:MP-inverse-entry}, we have the following definition.

\begin{definition}[MP weights]\label{def:MP-weights}
    Let $(\sigma,\tau)$ be a type II pair. We define the \textit{MP weight} associated to the pair $(\sigma,\tau)$ as the coefficient of $\sigma$ in $\partial_{[a]}^+(\tau)$, explicitly given by the formula\footnote{Note that the complex conjugate is dropped because all entries are integers.}
    \begin{align*}
    \mathrm{MP}_{\sigma,\tau}:=\frac{1}{\Delta}\sum_{\substack{ ST \text{ hedge} \\ \tau\in S, \sigma\in T}} (-1)^{\mathrm{pos}_T(\sigma)+\mathrm{pos}_S(\tau)} \det(\partial_{[a],i,S\times T})\det(\partial_{[a],i,S\backslash\{\tau\}\times T\backslash\{\sigma\}}).
\end{align*}
where the sum is over all hedges that contain $\tau$ in the stake set and $\sigma$ in the shrubbery, and $\mathrm{pos}_T(\sigma)$ and $\mathrm{pos}_S(\tau)$ are the positions of $\sigma$ in $T$ and $\tau$ in $S$ respectively.
\end{definition}

\begin{definition}[Weight of paths]
    For any path $\lambda\in\mathcal{P}(\sigma,\tau)$ from $\sigma$ to $\tau$ as defined in Definition \ref{def:path}:
    \[
\begin{tikzpicture}[>=stealth]
\matrix (m) [matrix of math nodes,
  row sep=2.8em,
  column sep=2.8em,
  ampersand replacement=\&]{
  \sigma=\sigma_0 \& \& \sigma_1 \& \& \cdots \& \sigma_m \& \\
  \& \tau_0 \& \& \tau_1 \& \& \& \tau_m=\tau \\
};

\draw[->, blue] (m-1-1) -- node[midway, above, sloped, font=\tiny]
  {\color{blue}$\mathrm{HHL}_{\sigma_0,\tau_0}$} (m-2-2);
\draw[->,red] (m-2-2) -- node[midway, above, sloped, font=\tiny]
  {\color{red}$\mathrm{MP}_{\sigma_1,\tau_0}$} (m-1-3);
\draw[->, blue] (m-1-3) -- node[midway, above, sloped, font=\tiny]
  {\color{blue}$\mathrm{HHL}_{\sigma_1,\tau_1}$} (m-2-4);
\draw[->,red] (m-2-4) -- node[midway, above, sloped, font=\tiny]
  {\color{red}$\mathrm{MP}_{\sigma_2,\tau_1}$} (m-1-5);
\draw[->, blue] (m-1-6) -- node[midway, above, sloped, font=\tiny]
  {\color{blue}$\mathrm{HHL}_{\sigma_m,\tau_m}$} (m-2-7);
\end{tikzpicture}
\]  
    we define its weight $w_{\lambda}$ as the product of the HHL weights and the MP weights along the path.
\end{definition}

The morphism $\mathcal{O}(\sigma)\rightarrow\mathcal{O}(\tau)$ is then defined as
\begin{align*}
    \Sigma_{\sigma,\tau}:=\sum_{\lambda\in \mathcal{P}(\sigma,\tau)} (-1)^{l(\lambda)} w_{\lambda}
\end{align*}
where the sum is taken over all paths $\lambda$ from $\sigma$ to $\tau$.

\begin{theorem}
    The morphism $\Sigma: C_{\bullet} \rightarrow C_{\bullet}$ on the HHL resolution is given by the direct sum of $\Sigma_{\sigma,\tau}$ over all pairs of strata $(\sigma,\tau)$.
\end{theorem}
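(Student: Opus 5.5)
The plan is to expand $\Sigma=\sum_{m\ge 0}\delta(h\delta)^m$ term by term and compute the matrix entries of each summand in the basis of cells. First I will identify the entries of $\delta$ and $h$. Since $\gr^F d$ keeps exactly the components $\mathcal{O}(\sigma)\to\mathcal{O}(\tau)$ with $\mathcal{O}(\sigma)=\mathcal{O}(\tau)$, the perturbation $\delta=d-\gr^F d$ has $(\sigma,\tau)$-entry equal to the HHL differential entry when $\mathcal{O}(\sigma)>\mathcal{O}(\tau)$, and zero otherwise. Here I use that a nonzero HHL entry forces $\mathcal{O}(\sigma)\geq\mathcal{O}(\tau)$, because $\prod x_i^{\epsilon_i}$ is a nonzero section with all $\epsilon_i\ge 0$. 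So the nonzero entries of $\delta$ are precisely the HHL weights $\mathrm{HHL}_{\sigma,\tau}$ of type I pairs.

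Next, $h=\bigoplus_{[a]}-\partial_{[a]}^+\otimes\mathcal{O}(a)$ only has entries between cells in the same $S_{[a]}$, and it raises dimension by one. Its entry from $\tau$ to $\sigma$ (with $\dim\sigma=\dim\tau+1$) is minus the coefficient of $\sigma$ in $\partial_{[a]}^+(\tau)$. By Definition~\ref{def:MP-weights} and Proposition~\ref{prop:MP-inverse-entry}, this is $-\mathrm{MP}_{\sigma,\tau}$ for the type II pair $(\sigma,\tau)$. It is zero if the pair is not of type II: either the line bundles differ or the cells lie in different graded pieces. I also need the convention that $\partial^+_{[a]}$ is tensored with the identity on $\mathcal{O}(a)$. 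The lifts of cells in $S_{[a]}$ give canonically isomorphic bundles, and the entries of $\gr^F d$ are $\pm1$, so the MP entries are just scalars. This needs a short check that the identifications of all $\mathcal{O}(\sigma)$, $\sigma\in S_{[a]}$, with $\mathcal{O}(a)$ are consistent with the earlier lemma identifying $\gr^F_{[a]}C_\bullet$ with $C^{\mathrm{BM}}(S_{[a]})\otimes\mathcal{O}(a)$.

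Then I multiply matrices. The $(\sigma,\tau)$-entry of $\delta(h\delta)^m$ is a sum over sequences $\sigma=\sigma_0,\tau_0,\sigma_1,\tau_1,\dots,\sigma_m,\tau_m=\tau$ in which each $\delta$-factor contributes a type I pair $(\sigma_i,\tau_i)$ and each $h$-factor contributes a type II pair $(\sigma_{i+1},\tau_{i-0})$, that is $(\sigma_{i+1},\tau_i)$. The summand is the product of the entries, namely
\[
\mathrm{HHL}_{\sigma_0,\tau_0}(-\mathrm{MP}_{\sigma_1,\tau_0})\mathrm{HHL}_{\sigma_1,\tau_1}\cdots\mathrm{HHL}_{\sigma_m,\tau_m}=(-1)^m w_\lambda .
\]
These sequences are exactly the paths $\lambda\in\mathcal{P}(\sigma,\tau)$ of length $l(\lambda)=m$ in the sense of Definition~\ref{def:path}. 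Terms with a zero entry drop out, and conversely every path contributes. The dimensions alternate correctly, since $\delta$ lowers dimension by one and $h$ raises it by one, so the endpoints satisfy $\dim\sigma=\dim\tau+1$.

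Summing over $m$ gives $\Sigma_{\sigma,\tau}=\sum_\lambda(-1)^{l(\lambda)}w_\lambda$. The sum is finite by the nilpotency of $h\delta$, and also because $\mathcal{O}(\sigma_i)$ strictly decreases along a path in the finite poset $BT$. The main point needing care is the bookkeeping of the minus sign in $h=-\partial^+$, together with the order of composition: matrices act on the left, so the path is read from $\sigma$ to $\tau$. Once that is fixed, the result is immediate.
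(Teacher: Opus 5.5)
Your proposal is correct and follows the same route as the paper. Both expand $\Sigma=\sum_{m\ge 0}\delta(h\delta)^m$ entrywise, read off the entries of $\delta$ as the HHL weights of type I pairs and those of $h=-\partial^+_{[a]}$ as $-\mathrm{MP}_{\sigma,\tau}$ on type II pairs, and match the index sequences with paths so that the sign is $(-1)^{l(\lambda)}$. You supply more of the bookkeeping than the paper's short sketch: why $\delta$ only sees strictly decreasing pairs, the order of composition, finiteness, and the consistency of the identifications $\mathcal{O}(\sigma)\cong\mathcal{O}(a)$ with the lemma $\gr^F_{[a]}C_\bullet\cong C^{\mathrm{BM}}_\bullet(S_{[a]})\otimes\mathcal{O}(a)$.
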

\begin{proof}
    This follows directly from the definition $\Sigma = \sum_{i=0}^{\infty} \delta(h\delta)^i$. In our setting, $h$ is given by the Moore-Penrose inverses, and $\delta$ is the part of the HHL differential that strictly decreases in the Bondal-Thomsen poset (i.e., the non-invertible part). Therefore, the component $\mathcal{O}(\sigma)\rightarrow\mathcal{O}(\tau)$ of $\Sigma$ is given by the sum of the products of the HHL weights and MP weights along all paths. The sign $(-1)^{l(\lambda)}$ comes from the fact that $h$ is defined as $-\partial^+$.
\end{proof}

Finally, according to the discussion in Remark \ref{rmk-projection-onto-harmonic}, the minimal resolution $(H_{\bullet},d_{\min})$ can be obtained by post-composing $\Sigma$ with $\id + d h + h d$ and then restricting to the harmonic part.

\begin{remark}\label{rmk:choice-of-homotopy-2}
    For a different choice of the homotopy $h$, one only needs to modify the definition of the weights of the type II pairs accordingly, and the same combinatorial description of the morphism $\Sigma$ in terms of paths still holds.
\end{remark}

\section{An example}\label{sec:example}

In this section we work out an explicit example. Consider the weighted projective plane $\PP(3,1,1)$ whose homogeneous coordinate ring is given by $\C[x,y,z]$ where the weight of $x$ is 3. The three rays in the fan of $\PP(3,1,1)$ are $(1,0)$, $(0,1)$ and $(-3,-1)$. Consider the point defined by the ideal $(y-z,x-y^3)$ embedded into $\PP(3,1,1)$. The corresponding Bondal stratification and the HHL resolution are given as follows:
\begin{center}
    \begin{minipage}[c]{0.32\textwidth}
        \centering
        \begin{tikzpicture}[scale=1.2, >=Latex]

  \draw[thick] (0,0) rectangle (3,3);

  \draw[thick, black] (0,0) -- (3,0);
  \draw[thick, black] (0,3) -- (3,3);
  \draw[thick, black] (0,0) -- (0,3);
  \draw[thick, black] (3,0) -- (3,3);

  \draw[thick,black] (0,3) -- (1,0);
  \draw[thick,black] (1,3) -- (2,0);
  \draw[thick,black] (2,3) -- (3,0);

  \fill (0,0) circle (1.5pt);
  \fill (3,0) circle (1.5pt);
  \fill (0,3) circle (1.5pt);
  \fill (3,3) circle (1.5pt);
  \fill (1,0) circle (1.5pt);
  \fill (2,0) circle (1.5pt);
  \fill (1,3) circle (1.5pt);
  \fill (2,3) circle (1.5pt);

  	\node at (0.45,0.45) {$F_1$};
	\node at (1,1.5) {$F_2$};
    \node at (2,1.5) {$F_3$};
    \node at (2.55,2.55) {$F_4$};

    \node at (-0.3,1.5) {$E_1$};
    \node at (0.4,1.3) {$E_2$};
    \node at (1.4,1.3) {$E_3$};
    \node at (2.4,1.3) {$E_4$};
    \node at (0.5,3.2) {$E_5$};
    \node at (1.5,3.2) {$E_6$};
    \node at (2.5,3.2) {$E_7$};

	\node at (-0.3,-0.3) {$V_1$};
    \node at (1,-0.3) {$V_2$};
    \node at (2,-0.3) {$V_3$};

\end{tikzpicture}
    \end{minipage}%
    \begin{minipage}[c]{0.78\textwidth}
        \centering
        \begin{align*}
    0 \rightarrow \bigoplus_{i=1}^4 \mathcal{O}(F_i) \xrightarrow{d_1} \bigoplus_{i=1}^7 \mathcal{O}(E_i) \xrightarrow{d_0} \bigoplus_{i=1}^3 \mathcal{O}(V_i) \rightarrow 0
\end{align*}
    \end{minipage}
\end{center}
where the differentials $d_1$ and $d_0$ are given by
\begin{align*}
    d_1 = \begin{pmatrix}
        -x & 0 & 0 & 1 \\
        -z & 1 & 0 & 0 \\
         0 & -z & 1 & 0 \\
          0 & 0 & -z & 1 \\
          -y  & 1 & 0 & 0 \\
          0  & -y & 1 & 0 \\
           0 & 0 & -y & 1 \\
    \end{pmatrix},
    d_0 = \begin{pmatrix}
        y-z & x & 0 & -y & -x & 0 & z \\
        0 & -y & 1 & 0 & z & -1 & 0 \\
        0 & 0 & -y & 1 & 0 & z & -1
    \end{pmatrix}
\end{align*}

\begin{figure}
    \centering
\begin{tikzpicture}[
  x=2.4cm,
  y=1.8cm,
  >=Stealth,
  class/.style={circle, fill=black, inner sep=1.8pt},
  every node/.style={font=\small}
]

  \draw[step=1, gray!35, thin] (0.5,0.5) grid (5.5,4.5);

  \draw[->] (0.5,0.5) -- (5.5,0.5) node[right] {$BT$};
  \draw[->] (0.5,0.5) -- (0.5,4.8) node[above] {$q$};

   \node[below] at (1,0.5) {$\mathcal{O}$};
   \node[below] at (2,0.5) {$\mathcal{O}(-1)$};
   \node[below] at (3,0.5) {$\mathcal{O}(-2)$};
   \node[below] at (4,0.5) {$\mathcal{O}(-3)$};
   \node[below] at (5,0.5) {$\mathcal{O}(-4)$};

  \foreach \y in {1,...,4}
    \node[left] at (0.5,\y) {\y};

  \node at (1,3) {$\mathcal{O}(V_1)$};
  \node at (2,4) {$\mathcal{O}(F_4)$};
  \node at (2,3) {$\bigoplus\limits_{i=1,4,7}\mathcal{O}(E_i)$};
  \node at (2,2) {$\mathcal{O}(V_3)$};
  \node at (3,3) {$\mathcal{O}(F_3)$};
  \node at (3,2) {$\bigoplus\limits_{i=3,6}\mathcal{O}(E_i)$};
  \node at (3,1) {$\mathcal{O}(V_2)$};
  \node at (4,2) {$\mathcal{O}(F_2)$};
    \node at (4,1) {$\bigoplus\limits_{i=2,5}\mathcal{O}(E_i)$};
    \node at (5,1) {$\mathcal{O}(F_1)$};

  \draw[->, thick] (2,3.8) -- (2,3.2);
  \draw[->, thick] (2,2.8) -- (2,2.2);
  \draw[->, thick] (3,2.8) -- (3,2.2);
  \draw[->, thick] (3,1.8) -- (3,1.2);
  \draw[->, thick] (4,1.8) -- (4,1.2);

  \node[font = \tiny] at (2.2,3.6) {$\begin{pmatrix}
    1 \\ 1 \\ 1
  \end{pmatrix}$};
  \node[font = \tiny] at (2.3,2.6) {$(0,1,-1)$};

  \node[font = \tiny] at (3.2,2.6) {$\begin{pmatrix}
    1 \\ 1 
  \end{pmatrix}$};
  \node[font = \tiny] at (3.3,1.6) {$(1,-1)$};

  \node[font = \tiny] at (4.2,1.6) {$\begin{pmatrix}
    1 \\ 1 
  \end{pmatrix}$};

  \draw[->,thick,red] (1.9,3.25) to[bend left] (1.9,3.8);
  \draw[->,thick,red] (1.9,2.25) to[bend left] (1.9,2.8);
  \draw[->,thick,red] (2.9,2.25) to[bend left] (2.9,2.8);
  \draw[->,thick,red] (2.9,1.25) to[bend left] (2.9,1.8);
  \draw[->,thick,red] (3.9,1.25) to[bend left] (3.9,1.8);

  \node[font = \tiny, red] at (1.55,3.5) {$(\frac{1}{3},\frac{1}{3},\frac{1}{3})$};
  \node[font = \tiny, red] at (1.6,2.5) {$\begin{pmatrix}
    0 \\ 1/2 \\ -1/2 
  \end{pmatrix}$};
  \node[font = \tiny, red] at (2.65,2.4) {$(\frac{1}{2},\frac{1}{2})$};
  \node[font = \tiny, red] at (2.55,1.5) {$\begin{pmatrix}
     1/2 \\ -1/2 
  \end{pmatrix}$};
  \node[font = \tiny, red] at (3.65,1.4) {$(\frac{1}{2},\frac{1}{2})$};

  \draw (0.75,3.15) rectangle (1.25,2.85);
  \draw (1.55,3.25) rectangle (2.45,2.8);
  \draw (3.6,1.2) rectangle (4.4,0.85);
  \draw (4.75,1.15) rectangle (5.25,0.85);

\end{tikzpicture}
\caption{$\partial$ (black) and the Moore-Penrose inverse $\partial^+$ (red)}
\label{fig:MP-inverse}
\end{figure}

The Bondal-Thomsen collection is easily seen to be
\begin{align*}
    BT=\{\mathcal{O},\mathcal{O}(-1),\mathcal{O}(-2),\mathcal{O}(-3),\mathcal{O}(-4)\}
\end{align*}
and the stratification is given by
\begin{align*}
    S_0=\{V_1\},\quad S_{-1}&=\{V_3,E_1,E_4,E_7, F_4\} \\
    S_{-2} = \{V_2, E_3, E_6, F_3\},\quad &S_{-3} = \{E_2, E_5, F_2\},\quad S_{-4} = \{F_1\}.
\end{align*}

The nonzero Borel-Moore homology groups are indicated by boxes in Figure \ref{fig:MP-inverse}, and each of them is 1-dimensional. Therefore it suffices to look at $\Sigma_{\sigma,\tau}$ where $(\sigma,\tau)$ is one of the following:
\begin{align*}
    (F_1, E_i),\ (E_i, V_1),\ i=1,2,4,5,7
\end{align*}
We take $(F_1, E_i)$ for $i=1,4,7$ as examples.

\smallskip

{\bf Paths from $F_1$ to $E_1$:} It is easy to see that there is only one path $F_1 \xrightarrow{\color{blue} -x}E_1$.

\smallskip

{\bf Paths from $F_1$ to $E_4$:} There are 4 different paths from $F_1$ to $E_4$:
\begin{align*}
    F_1\xrightarrow{\color{blue} -z} E_2\xrightarrow{\color{red}\frac{1}{2}} F_2\xrightarrow{\color{blue}-z} E_3\xrightarrow{\color{red}\frac{1}{2}} F_3\xrightarrow{\color{blue}-z} E_4 \\
    F_1\xrightarrow{\color{blue} -y} E_5\xrightarrow{\color{red}\frac{1}{2}} F_2\xrightarrow{\color{blue}-z} E_3\xrightarrow{\color{red}\frac{1}{2}} F_3\xrightarrow{\color{blue}-z} E_4 \\
    F_1\xrightarrow{\color{blue} -z} E_2\xrightarrow{\color{red}\frac{1}{2}} F_2\xrightarrow{\color{blue}-y} E_6\xrightarrow{\color{red}\frac{1}{2}} F_3\xrightarrow{\color{blue}-z} E_4 \\
    F_1\xrightarrow{\color{blue} -y} E_5\xrightarrow{\color{red}\frac{1}{2}} F_2\xrightarrow{\color{blue}-y} E_6\xrightarrow{\color{red}\frac{1}{2}} F_3\xrightarrow{\color{blue}-z} E_4 
\end{align*}
where the HHL weights are shown in blue and the MP weights are shown in red. We provide the details of the computation of the Moore-Penrose inverse of the differential in the following sequence:
\begin{align*}
    0\rightarrow\C\cdot F_2\xrightarrow{\begin{pmatrix} 1 \\ 1\end{pmatrix}} \C\cdot E_2\oplus \C\cdot E_5 \rightarrow0.
\end{align*}
The hedges consist of a shrubbery $T\subseteq\{F_2\}$ and a stake set $S\subseteq\{E_2,E_5\}$. There are 2 possible hedges for this sequence: $(T,S_2)$ and $(T,S_5)$ where $T=\{F_2\}$, $S_2=\{E_2\}$ and $S_5=\{E_5\}$. The corresponding hedge splittings are defined by
\begin{align*}
    \partial^+_{S_2 T} = (1,0),\qquad \partial^+_{S_5 T} = (0,1)
\end{align*}
Therefore the Moore-Penrose inverse of the differential $\partial$ is given by the average $(\frac{1}{2},\frac{1}{2})$. The other Moore-Penrose inverses are computed in a similar manner. Finally, taking the sum of these contributions and keeping track of the signs, we obtain $\mathcal{O}(F_1)\xrightarrow{-\frac{1}{4}z(y+z)^2}\mathcal{O}(E_4)$.

\smallskip

{\bf Paths from $F_1$ to $E_7$:} Similarly, there are 4 paths as in the $E_4$ case, just replacing the end of each path by $E_7$ and modifying the HHL weights of the last step accordingly. A similar computation yields $\mathcal{O}(F_1)\xrightarrow{-\frac{1}{4}y(y+z)^2}\mathcal{O}(E_7)$.

\medskip

Putting all these together we get
\begin{align*}
    \mathcal{O}(F_1)\xrightarrow{\begin{pmatrix}
    -x \\ -\frac{1}{4}z(y+z)^2 \\ -\frac{1}{4}y(y+z)^2
    \end{pmatrix}}\mathcal{O}(E_1)\oplus \mathcal{O}(E_4) \oplus \mathcal{O}(E_7)
\end{align*}
If we choose $(-2,1,1)$ as the harmonic representative of the 1-dimensional homology group at $\bigoplus_{i=1,4,7}\mathcal{O}(E_i)$, by projecting to homology we get
\begin{align*}
    \mathcal{O}(-4)\xrightarrow{2x-\frac{1}{4}(y+z)^3}\mathcal{O}(-1)
\end{align*}
Similarly, one can compute all other morphisms in the resolution.

\begin{figure}
    \centering
\begin{tikzpicture}[
  x=2.4cm,
  y=1.8cm,
  >=Stealth,
  class/.style={circle, fill=black, inner sep=1.8pt},
  every node/.style={font=\small}
]

  \draw[step=1, gray!35, thin] (0.5,0.5) grid (5.5,4.5);

  \draw[->] (0.5,0.5) -- (5.5,0.5) node[right] {$BT$};
  \draw[->] (0.5,0.5) -- (0.5,4.8) node[above] {$q$};

   \node[below] at (1,0.5) {$\mathcal{O}$};
   \node[below] at (2,0.5) {$\mathcal{O}(-1)$};
   \node[below] at (3,0.5) {$\mathcal{O}(-2)$};
   \node[below] at (4,0.5) {$\mathcal{O}(-3)$};
   \node[below] at (5,0.5) {$\mathcal{O}(-4)$};

  \node at (1,3) {$\mathcal{O}$};
  \node at (2,3) {$\mathcal{O}(-1)$};

    \node at (4,1) {$\mathcal{O}(-3)$};
    \node at (5,1) {$\mathcal{O}(-4)$};

    \draw[->, thick] (4.8,1.2) -- (2.2,2.8);
    \draw[->, thick] (3.8,1.2) -- (1.2,2.8);
    \draw[->, thick] (4.7,1) -- (4.2,1);
    \draw[->, thick] (1.7,3) -- (1.2,3);

    \node at (4.5, 0.85) {\color{red}$y-z$};
    \node at (1.5, 3.15) {\color{red}$y-z$};
    \node at (3.5, 2.4) {\color{red}$2x-\frac{(y+z)^3}{4}$};
    \node at (2.5, 1.5) {\color{red}$-2x+\frac{(y+z)^3}{4}$};

\end{tikzpicture}
\caption{Minimal resolution}
\label{fig:min-res}
\end{figure}

Eventually we get the minimal resolution
\begin{align*}
    0\rightarrow\mathcal{O}(-4)\xrightarrow{\begin{pmatrix} y-z \\ 2x-\frac{(y+z)^3}{4} \end{pmatrix}}\substack{ \mathcal{O}(-3) \\ \oplus \\ \mathcal{O}(-1) } \xrightarrow{\begin{pmatrix} -2x+\frac{(y+z)^3}{4} & y-z \end{pmatrix}} \mathcal{O}\rightarrow 0
\end{align*}
as shown in Figure \ref{fig:min-res}.

\medskip

We end this paper with a final remark. It is clear that the choice of the homotopy map $h$ on the associated graded complex of the HHL resolution is not unique. The advantage of the Moore-Penrose inverse is that it is canonical in the sense that it does not rely on any choices. Therefore the only input data needed for this construction is the Bondal stratification itself.

\smallskip

In general, if we are allowed to drop the canonicity requirement, then there are often other choices of $h$ that make the differentials simpler. In this example, if we use the homotopy induced by the perfect Morse matching
\begin{align*}
    V_2\leftrightarrow E_3,\quad V_3\leftrightarrow E_4,\quad
    E_5\leftrightarrow F_2,\quad
    E_6\leftrightarrow F_3,\quad
    E_7\leftrightarrow F_4,
\end{align*}
then the homotopy maps are indicated in Figure \ref{fig:morse-matching}.
\begin{figure}
    \centering
\begin{tikzpicture}[
  x=2.4cm,
  y=1.8cm,
  >=Stealth,
  class/.style={circle, fill=black, inner sep=1.8pt},
  every node/.style={font=\small}
]

  \draw[step=1, gray!35, thin] (0.5,0.5) grid (5.5,4.5);

  \draw[->] (0.5,0.5) -- (5.5,0.5) node[right] {$BT$};
  \draw[->] (0.5,0.5) -- (0.5,4.8) node[above] {$q$};

   \node[below] at (1,0.5) {$\mathcal{O}$};
   \node[below] at (2,0.5) {$\mathcal{O}(-1)$};
   \node[below] at (3,0.5) {$\mathcal{O}(-2)$};
   \node[below] at (4,0.5) {$\mathcal{O}(-3)$};
   \node[below] at (5,0.5) {$\mathcal{O}(-4)$};

  \foreach \y in {1,...,4}
    \node[left] at (0.5,\y) {\y};

  \node at (1,3) {$\mathcal{O}(V_1)$};
  \node at (2,4) {$\mathcal{O}(F_4)$};
  \node at (2,3) {$\bigoplus\limits_{i=1,4,7}\mathcal{O}(E_i)$};
  \node at (2,2) {$\mathcal{O}(V_3)$};
  \node at (3,3) {$\mathcal{O}(F_3)$};
  \node at (3,2) {$\bigoplus\limits_{i=3,6}\mathcal{O}(E_i)$};
  \node at (3,1) {$\mathcal{O}(V_2)$};
  \node at (4,2) {$\mathcal{O}(F_2)$};
    \node at (4,1) {$\bigoplus\limits_{i=2,5}\mathcal{O}(E_i)$};
    \node at (5,1) {$\mathcal{O}(F_1)$};

  \draw[->, thick] (2,3.8) -- (2,3.2);
  \draw[->, thick] (2,2.8) -- (2,2.2);
  \draw[->, thick] (3,2.8) -- (3,2.2);
  \draw[->, thick] (3,1.8) -- (3,1.2);
  \draw[->, thick] (4,1.8) -- (4,1.2);

  \node[font = \tiny] at (2.2,3.6) {$\begin{pmatrix}
    1 \\ 1 \\ 1
  \end{pmatrix}$};
  \node[font = \tiny] at (2.3,2.6) {$(0,1,-1)$};

  \node[font = \tiny] at (3.2,2.6) {$\begin{pmatrix}
    1 \\ 1 
  \end{pmatrix}$};
  \node[font = \tiny] at (3.3,1.6) {$(1,-1)$};

  \node[font = \tiny] at (4.2,1.6) {$\begin{pmatrix}
    1 \\ 1 
  \end{pmatrix}$};

  \draw[->,thick,red] (1.9,3.25) to[bend left] (1.9,3.8);
  \draw[->,thick,red] (1.9,2.25) to[bend left] (1.9,2.8);
  \draw[->,thick,red] (2.9,2.25) to[bend left] (2.9,2.8);
  \draw[->,thick,red] (2.9,1.25) to[bend left] (2.9,1.8);
  \draw[->,thick,red] (3.9,1.25) to[bend left] (3.9,1.8);

  \node[font = \tiny, red] at (1.55,3.5) {$(0,0,1)$};
  \node[font = \tiny, red] at (1.6,2.5) {$\begin{pmatrix}
    0 \\ 1 \\ 0 
  \end{pmatrix}$};
  \node[font = \tiny, red] at (2.65,2.4) {$(0,1)$};
  \node[font = \tiny, red] at (2.65,1.5) {$\begin{pmatrix}
     1 \\ 0 
  \end{pmatrix}$};
  \node[font = \tiny, red] at (3.65,1.4) {$(0,1)$};

\end{tikzpicture}
\caption{Homotopies from a perfect Morse matching}
\label{fig:morse-matching}
\end{figure}
A similar computation yields the more familiar form of the minimal resolution
\begin{align*}
    0\rightarrow\mathcal{O}(-4)\xrightarrow{\begin{pmatrix} y-z \\ -x+y^3 \end{pmatrix}}\substack{ \mathcal{O}(-3) \\ \oplus \\ \mathcal{O}(-1) } \xrightarrow{\begin{pmatrix} x-y^3 & y-z \end{pmatrix}} \mathcal{O}\rightarrow 0.
\end{align*}

\end{document}